\documentclass[11pt]{article}
\usepackage{fullpage}
\usepackage[T1]{fontenc}
\usepackage{lmodern}
\usepackage{graphicx}
\usepackage{amsmath,amssymb,amsthm}
\usepackage{microtype}
\usepackage{algorithm}
\usepackage{algorithmic}
\usepackage{subcaption}
\usepackage[group-separator={,}]{siunitx}

\newcommand{\C}{\mathbb{C}}
\newcommand{\F}{\mathbb{F}}
\newcommand{\R}{\mathbb{R}}

\newcommand{\Z}{\mathbb{Z}}

\newcommand{\caL}{\mathcal{L}}
\newcommand{\zeros}{\mathbf{0}}
\newcommand{\ones}{\mathbf{1}}
\newcommand{\bx}{\mathbf{x}}
\newcommand{\by}{\mathbf{y}}
\newcommand{\bu}{\mathbf{u}}
\newcommand{\Wc}{W_{c}}
\newcommand{\Wno}{W_{\bar o}}

\newcommand{\Wco}{W_{co}}
\newcommand{\Wnco}{W_{\bar c o}}
\newcommand{\Wcno}{W_{c\bar o}}
\newcommand{\Wncno}{W_{\bar c\bar o}}
\renewcommand{\Re}{\operatorname{Re}}
\renewcommand{\Im}{\operatorname{Im}}
\newcommand{\Nin}{N^{\mathrm{in}}}
\newcommand{\Nout}{N^{\mathrm{out}}}
\newcommand{\Tc}{T_c}
\newcommand{\To}{T_o}
\DeclareMathOperator{\GL}{GL}
\DeclareMathOperator{\im}{im}

\DeclareMathOperator{\dimv}{\underline{dim}}
\DeclareMathOperator{\Rep}{Rep}
\usepackage{mathtools}

\usepackage{nicematrix}
\usepackage{tikz}
\usetikzlibrary{positioning,graphs}

\usepackage{xcolor}

\newtheorem{theorem}{Theorem}[section]
\newtheorem{lemma}[theorem]{Lemma}
\newtheorem{proposition}[theorem]{Proposition}
\newtheorem{corollary}[theorem]{Corollary}
\theoremstyle{definition}
\newtheorem{definition}[theorem]{Definition}

\newtheorem{remark}[theorem]{Remark}
\newtheorem*{remark*}{Remark}
\newcommand{\finbox}{\hspace*{\fill}$\rule{0.15cm}{0.2cm}$}
\AtEndEnvironment{remark}{\finbox}
\numberwithin{equation}{section}

\usepackage{hyperref}
\title{\bf Quiver Semistability and Structured Kalman Decompositions for Networked Linear Dynamical Systems}
\author{
Kazuo Murota \\
\url{murota@tmu.ac.jp} \\
Faculty of Economics and Business Administration, \\
Tokyo Metropolitan University, \\
Tokyo 192-0397, Japan; \\
The Institute of Statistical Mathematics, \\ 
Tokyo 190-8562, Japan.
\and    
Tasuku Soma \\ 
\url{soma@ism.ac.jp} \\
The Institute of Statistical Mathematics, \\ 
Tokyo 190-8562, Japan.
}
\date{August 30, 2026}
\begin{document}
\maketitle
\begin{abstract}
    We introduce new notions of controllability and observability for networked linear time-invariant (LTI) systems based on $\sigma$-semistability of quiver representations.
    Utilizing King's criterion for $\sigma$-semistability, we define a network generalization of the Kalman decomposition for networked LTI systems, which systematically decomposes the local and interconnection dynamics while respecting the underlying network structure.
    Furthermore, we present efficient algorithms for deciding the proposed controllability and observability of a given networked LTI system and for finding the Kalman-type decomposition.
    We also show efficient algorithms for deciding the $\sigma$-semistability of representations of acyclic quivers with self-loops if the weight $\sigma$ has the same sign for all vertices with self-loops.
    Such quiver representations and weights arise from networked LTI systems.
\end{abstract}

\section{Introduction}\label{sec:intro}
Controllability and observability of linear dynamical systems are fundamental concepts in control theory and have been widely used in real-world complex network analysis~\cite{Liu2011,Liu2016}.
For linear time-invariant (LTI) systems, controllability and observability can be characterized by the dimension of certain invariant subspaces called the controllable and unobservable subspaces, respectively (the \emph{Kalman rank condition}).
These invariant subspaces induce a canonical decomposition of LTI systems (the \emph{Kalman decomposition}), which reveals various essential properties of systems.

In many real-world applications such as social network analysis and systems biology, we often consider a \emph{networked} LTI system---a system consisting of interconnected small LTI subsystems~\cite{Hao2022}.
Networked LTI systems are also called \emph{network-of-networks}~\cite{Chapman2014} and \emph{networked MIMO systems}~\cite{Wang2016a}.
A naive approach for analyzing networked LTI systems is to simply regard them as large LTI systems and apply classical control-theoretic methods such as the Kalman decomposition.
However, this naive approach completely ignores the underlying network structure and often fails to provide useful insights into the system.
For example, the controllable and unobservable subspaces may be mixtures of substates across different subsystems. 
Therefore, these control-theoretic concepts may not be interpretable in terms of the network structure~\cite{Iudice2019}.
In fact, there seems to be no standard framework suitable for networked LTI systems with high-dimensional subsystems; 
it is even said that ``we lack a general framework to systematically explore the control of networks of networks''~\cite{Liu2016}.

This prompts us to develop new notions of controllability and observability that respect the underlying network structure.

\subsection{Our contributions}
In this paper, we study networked LTI systems through the lens of \emph{quiver representations} and we demonstrate that quiver representations provide a natural framework for this purpose.
Quiver representations are a powerful framework for studying problems in algebra and representation theory in a unified manner~\cite{Derksen2017book}.
King~\cite{King1994} introduced the notion of semistability of quiver representations with respect to a weight (\emph{$\sigma$-semistability}).
He also proved a characterization of $\sigma$-semistability in terms of the dimensions of collections of invariant subspaces (\emph{King's criterion}).
Such a collection of invariant subspaces is called a \emph{subrepresentation}.
Bader~\cite{Bader2008} pioneered the use of $\sigma$-semistability of quiver representations to study LTI systems.
He established a beautiful connection between the controllability and observability of LTI systems and the $\sigma$-semistability of the corresponding quiver representation.
In the last two decades, $\sigma$-semistability of quiver representations has found a wide range of applications in computational complexity~\cite{Gurvits2004,Garg2019}, functional analysis~\cite{Bennett2008,Garg2018a}, and statistics~\cite{Derksen2021}.
Furthermore, various efficient algorithms for deciding the $\sigma$-semistability of quiver representations have been developed~\cite{Garg2019,Ivanyos2018,Franks2018,Burgisser2018,Hamada2021,Franks2023,Iwamasa2025}.

\paragraph{Network-respecting controllability, observability, and Kalman-type decomposition.}
Inspired by Bader's result, we introduce new notions of controllability, observability, and Kalman-type decomposition that are suitable for networked LTI systems.
Our approach closely follows Bader's idea. 
We regard a given networked LTI system as a natural representation of a quiver with self-loops.
Then, we define the networked LTI system to be \emph{network-respecting controllable} if the corresponding quiver representation is $\sigma$-semistable for an appropriate weight $\sigma$.
Bader's generalized King's criterion~\cite{Bader2008} characterizes the network-respecting controllability in terms of subrepresentations, i.e., collections of invariant subspaces \emph{within} each subsystem.
These invariant subspaces in turn decompose the matrices of local and interconnection dynamics consistently with the underlying network structure.
We can also define \emph{network-respecting observability} in a natural dual fashion.
Finally, we define a new Kalman-type decomposition of networked LTI systems, which decomposes the local and interconnection dynamics matrices as well as input and output matrices, based on the network-respecting controllability and observability.
See Figure~\ref{fig:concepts} for the relationships among these concepts.

\begin{figure}[ht]
    \centering
    \tikzset{block/.style={draw,rectangle,align=center}}
    \begin{tikzpicture}[>=latex]
        \node[block] (NC) {Network-respecting \\ controllability};
        \node[block, right=6em of NC] (SS) {$\sigma$-semistability \\ for $\sigma = -\ones$};
        \node[block, below=5em of SS] (King) {$\sigma(\dimv W) \leq 0$ \\ $\sigma(\dimv W) \leq \sigma(\dimv V)$};
        \node[block, right=6em of King] (SB) {Simultaneous block \\ decomposition};
        \draw[<->] (NC) -- (SS) node[above, midway]{Def~\ref{def:nc}} node[below, midway, font=\footnotesize]{cf.~Lem~\ref{lem:ss-LTI}};
        \draw[<->] (SS) -- (King) node[right, midway, align=left]{Generalized King's criterion for \\ marked quivers (Lem~\ref{lem:king-marked})};
        \draw[<->] (King) -- (SB) node[below, midway]{Thm~\ref{thm:block-uptri-co}};
    \end{tikzpicture}
    \caption{Relations of the concepts.\label{fig:concepts}}
\end{figure}

We can show that network-respecting controllability and observability are necessary conditions for the ordinary controllability and observability of networked LTI systems.
The converse is not true in general.
Therefore, our proposed notions do not replace the ordinary ones.
Rather, they identify subspaces within each subsystem that certify uncontrollability and unobservability in the ordinary sense.

\paragraph{Efficient Algorithms.}
Furthermore, we devise efficient algorithms that, given a networked LTI system, decide the network-respecting controllability and observability, and find the Kalman-type decomposition.
These algorithms use a simple subspace iteration that constructs a subrepresentation.
We show that our algorithms output subrepresentations that maximally violate the generalized King's criterion in polynomial time.

\paragraph{Extension to target control.}
Another benefit of using quiver representations is that our concepts and algorithms easily extend to the \emph{target} controllability and observability setting~\cite{Gao2014,Hao2022,Wang2024}.
The target setting is useful when we only want to control a subset of subsystems rather than the entire system, which is often the case in real-world complex networks.

\paragraph{Semistability in quivers with self-loops.}
The aforementioned results for network-respecting controllability and observability may be of significance from the computational complexity perspective, because they indicate a new class of quivers and weights for which we can decide $\sigma$-semistability and find a maximally violating subrepresentation in King's criterion in deterministic polynomial time, even under the existence of self-loops.
In contrast, the known algorithm for $\sigma$-semistability~\cite{Iwamasa2025} can deal with an arbitrary weight $\sigma$ but applies only to \emph{acyclic} quivers, i.e., quivers without directed cycles, including self-loops.
To complement our results, we prove that we can decide the $\sigma$-semistability of a quiver representation efficiently if each vertex has at most one self-loop, the quiver becomes acyclic after removing all self-loops, and the sign of the weight $\sigma$ is the same for all vertices with self-loops.
Quivers and weights appearing in network-respecting controllability and observability satisfy these conditions when the underlying network is acyclic, so this result provides another efficient algorithm for this special case.
Furthermore, this result allows $\sigma$ to take an arbitrary value for vertices without self-loops and might be useful beyond studies of networked LTI systems.
We leave further exploration to future work.

\subsection{Related work}
Semistability of quiver representations is a special case of the stability concepts in Mumford's \emph{geometric invariant theory (GIT)}.
B\"urgisser et al.~\cite{Burgisser2019} proposed an algorithmic framework for the membership problems in null-cones and moment polytopes in general GIT and devised continuous optimization algorithms for these problems.
It is known that for acyclic quivers, a representation $V$ is $\sigma$-semistable if and only if the orbit closure of $(V, 1)$ does not intersect with the origin; see, e.g., \cite[Section~9.8]{Derksen2017book}.
Therefore, $\sigma$-semistability in acyclic quivers falls into their null-cone membership framework.
However, this is not the case for cyclic quivers because $\sigma$-semistability is defined by the orbit closure of $(V, 1)$ not intersecting with the zero section rather than the origin; see Section~\ref{sec:pre} for the formal definition.
To the best of our knowledge, there is no known algorithm for deciding $\sigma$-semistability even for the special class of quivers and weights considered in this paper.

There are several lines of research on LTI systems with network structures in the literature of control theory and complex networks.
Most previous work deals with one-dimensional subsystems.
Lin~\cite{Lin1974} introduced the notion of \emph{structural controllability} and devised an efficient algorithm to decide it with bipartite matching.
Liu, Slotine, and Barab\'asi~\cite{Liu2011} utilized structural controllability to analyze real-world complex networks, which sparked a large body of follow-up research;
see, e.g., a survey by Liu and Barab\'asi~\cite{Liu2016}.

We list here several previous works that deal with high-dimensional subsystems.
Their settings can be categorized into two groups: 
all subsystems have the same local dynamics (\emph{homogeneous} subsystems) or subsystems may have different local dynamics (\emph{heterogeneous} subsystems).
Chapman et al.~\cite{Chapman2014} studied a special class of homogeneous networked LTI systems called Cartesian product systems.
Wang et al.~\cite{Wang2016a} studied homogeneous subsystems with a general network topology and a general input/output structure, assuming homogeneous interconnection dynamics.
Tang et al.~\cite{Tang2025} studied homogeneous subsystems with a multi-layer topology and heterogeneous inter-layer interconnection dynamics.
Hao et al.~\cite{Hao2022} studied target controllability in the homogeneous setting.
For heterogeneous subsystems, Zhou~\cite{Zhou2015} considered a general network topology under the assumption that every subsystem has a direct input.
Wang et al.~\cite{Wang2024} studied target controllability in multi-layered networks with each layer having homogeneous local dynamics.
These previous works focused on deriving extensions of the \emph{Popov-Belevitch-Hautus (PBH) test} for networked LTI systems and did not study Kalman-type decompositions.

Finally, we point out that Kempker et al.~\cite{Kempker2012} used a collection of invariant subspaces for control of coordinated linear systems, a special class of networked LTI systems with top-down network topology.
They proposed several controllability and observability definitions and Kalman-type decompositions for coordinated linear systems.
We remark that they did not describe their results in quiver representation terminology.
To the best of our knowledge, there is no previous work that systematically studies the Kalman-type decomposition for general networked LTI systems.

\paragraph{Organization of this paper.}
The rest of this paper is organized as follows.
Section~\ref{sec:pre} provides preliminaries on quiver representations and King's criterion.
Section~\ref{sec:control} is the main section of this paper.
After reviewing the basics of LTI systems and Bader's result, we describe our new notions of controllability, observability, and Kalman-type decomposition for networked LTI systems, as well as the algorithms.
Section~\ref{sec:reduction} presents our result on $\sigma$-semistability in acyclic quivers with self-loops.

\section{Preliminaries on quiver representations}\label{sec:pre}
We follow \cite{Derksen2017book,King1994}.
Throughout the paper, we consider an underlying field $\F = \R$ or $\F = \C$.
Let $Q = (Q_0, Q_1)$ be a quiver, where $Q_0$ is the set of vertices and $Q_1$ is the set of arcs.
We denote the head and tail vertices of an arc $a \in Q_1$ by $ha$ and $ta$, respectively.
A representation $V$ of $Q$ over $\F$ is an assignment of a finite dimensional $\F$-vector space $V(i)$ to each vertex $i \in Q_0$ and an $\F$-linear map $V(a): V(ta) \to V(ha)$ to each arc $a \in Q_1$.
By choosing a basis of each vector space $V(i)$, we can simply regard $V(a)$ as a matrix over $\F$.
The \emph{dimension vector} $\dimv V$ of $V$ is the vector $(\dimv V)(i) = \dim V(i)$ for $i \in Q_0$.
The set of all representations of $Q$ over $\F$ with a fixed dimension vector $\alpha$ is called the \emph{representation space} and denoted by $\Rep(Q, \alpha; \F)$ (or simply by $\Rep(Q, \alpha)$ when the underlying field $\F$ is clear from the context).
Let $\GL(Q, \alpha; \F) := \prod_{i \in Q_0}\GL(\alpha(i); \F)$, which is a linear algebraic group over $\F$.
Similarly, we write $\GL(Q, \alpha)$ when the underlying field $\F$ is clear from the context.
The group $\GL(Q, \alpha)$ acts on the representation space $\Rep(Q, \alpha)$ by basis change as follows:
for $g = (g_i)_{i \in Q_0} \in \GL(Q, \alpha)$ and $V \in \Rep(Q, \alpha)$, $(g \cdot V)(a) = g_{ha} V(a) g_{ta}^{-1}$ ($a \in Q_1$).
Let $\sigma: Q_0 \to \Z$ be an integer vertex weight on $Q$.
Let $\chi_\sigma(g) = \prod_{i \in Q_0} \det(g_i)^{\sigma(i)}$ be the multiplicative character of $\GL(Q, \alpha)$ associated with $\sigma$.
A representation $V$ is said to be \emph{$\sigma$-semistable}~\cite{King1994} if the orbit closure of $(V, 1) \in \Rep(Q, \alpha) \oplus \chi_\sigma$ under the $\GL(Q, \alpha)$-action does not intersect with (i.e., is disjoint from) the zero section $\Rep(Q, \alpha) \times \{0\}$.
The \emph{transposed quiver} $Q^\top$ of $Q$ is the quiver obtained by reversing the direction of all arcs in $Q$.
The \emph{transposed representation} $V^\top$ of $V$ is the representation of $Q^\top$ defined by $(V^\top)(i) = V(i)$ ($i \in Q_0$) and $(V^\top)(a) = V(a)^\top$ ($a \in Q_1$), where $V(a)^\top$ is the transpose of $V(a)$ as a matrix.
It is easy to see that $V$ is $\sigma$-semistable if and only if $V^\top$ is $(-\sigma)$-semistable.

A subrepresentation $W$ of $V$ is a set of subspaces $W(i) \leq V(i)$ ($i \in Q_0$) such that $V(a) W(ta) \leq W(ha)$ for every $a \in Q_1$.
For a dimension vector $\alpha$, we define $\sigma(\alpha) := \sum_{i \in Q_0} \sigma(i) \alpha(i)$.
King~\cite{King1994} provided a criterion for $\sigma$-semistability in terms of linear inequalities, which is known as \emph{King's criterion}.

\begin{lemma}[King's criterion \cite{King1994}]\label{lem:king}
    Let $\F = \C$ or $\F = \R$.\footnote{The original paper of King deals with the complex case ($\F = \C$) and the real case ($\F = \R$) follows from it. For completeness, we provide a proof in Appendix~\ref{sec:ss-R}.}
    A quiver representation $V$ over $\F$ is $\sigma$-semistable if and only if $\sigma(\dimv V) = 0$ and $\sigma(\dimv W) \leq 0$ for every subrepresentation $W \leq V$.
\end{lemma}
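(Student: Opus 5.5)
The plan is the standard Hilbert--Mumford argument. We prove the complex case and then deduce the real case. Throughout, the group $G=\GL(Q,\alpha)$ acts on $\Rep(Q,\alpha)\oplus\chi_\sigma$ by $g\cdot(V,z)=(g\cdot V,\chi_\sigma(g)^{-1}z)$, with whatever sign convention the definition fixes. The first step is to translate the definition. The orbit closure of $(V,1)$ meets the zero section if and only if there is a sequence $g_k$ with $g_k\cdot V$ bounded and $\chi_\sigma(g_k)\to\infty$ (or $\to 0$, depending on convention). Equivalently, $V$ fails to be semistable exactly when the orbit of $(V,1)$ in $\Rep\times\F$ has a point of the form $(V',0)$ in its closure.

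\textbf{Necessity.} For the condition $\sigma(\dimv V)=0$, use the scalars $g=(t I_{\alpha(i)})_i$. They act trivially on $V$ and satisfy $\chi_\sigma(g)=t^{\sigma(\dimv V)}$, so if $\sigma(\dimv V)\ne0$ then letting $t\to0$ or $t\to\infty$ sends the second coordinate to $0$. For the subrepresentation condition, take $W\le V$ with $\sigma(\dimv W)>0$ and choose complements $V(i)=W(i)\oplus U(i)$. Define the one-parameter subgroup $\lambda(t)$ acting as $t^{-1}$ on $W(i)$ and $1$ on $U(i)$ (or the reverse). Since each $V(a)$ is block upper triangular with zero block $U\to W$ absent in the sense that $V(a)W(ta)\le W(ha)$, the limit $\lim_{t\to0}\lambda(t)\cdot V$ exists and equals the associated graded $W\oplus V/W$. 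Meanwhile $\chi_\sigma(\lambda(t))=t^{\mp\sigma(\dimv W)}$, so the correct choice of sign drives the character coordinate to $0$. This is routine bookkeeping of signs.

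\textbf{Sufficiency over $\C$.} Suppose the orbit closure meets the zero section. Then some $G$-invariant-theoretic obstruction applies. Concretely, consider the extended group $G\times\C^\times$, or equivalently apply the Hilbert--Mumford criterion for the affine $G$-variety $\Rep\oplus\chi_\sigma$ with the closed $G$-stable set $Z=\Rep\times\{0\}$. By the Hilbert--Mumford / Kempf criterion, in its relative form for reductive groups and closed invariant subsets, there is a one-parameter subgroup $\lambda:\C^\times\to G$ with $\lim_{t\to0}\lambda(t)\cdot(V,1)\in Z$. Decompose each $V(i)=\bigoplus_n V_n(i)$ into the weight spaces of $\lambda$. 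Existence of the limit means that $V(a)$ maps $V_n(ta)$ into $\bigoplus_{m\ge n}V_m(ha)$. Hence each filtration $F_{\ge n}(i)=\bigoplus_{m\ge n}V_m(i)$ is a subrepresentation. The condition that the character coordinate tends to zero reads $\sum_n n\,\sigma(\dimv V_n)>0$ (with appropriate sign). Abel summation gives
\[
\sum_n n\,\sigma(\dimv V_n)=\sum_n \sigma(\dimv F_{\ge n})-(\text{const})\,\sigma(\dimv V),
\]
so using $\sigma(\dimv V)=0$ some $F_{\ge n}$ satisfies $\sigma(\dimv F_{\ge n})>0$, which contradicts the hypothesis. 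The main obstacle is justifying the relative Hilbert--Mumford step, namely that closure meeting a closed invariant set is witnessed by a one-parameter subgroup. I would cite King's use of it, or reduce to the standard null-cone version by passing to $G\times\C^\times$ acting with an auxiliary weight, as King does via semi-invariants: non-semistability is equivalent to vanishing of all semi-invariants of weight $\chi_\sigma^{m}$, $m\ge1$, which is a null-cone statement.

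\textbf{Real case.} Both the subrepresentation condition and the orbit-closure condition are compatible with complexification. A real subrepresentation complexifies, and conversely the $\C$-weight filtration can be chosen defined over $\R$ because King's criterion is linear-algebraic. Rather than rely on that, I would argue directly. Necessity over $\R$ uses the same real one-parameter subgroups. For sufficiency, if the real orbit closure meets the zero section, then so does the complex one, so by the complex case some complex subrepresentation $W_\C$ of $V\otimes\C$ has $\sigma(\dimv W_\C)>0$. To descend, take a subrepresentation maximizing $\sigma(\dimv\cdot)$ and, among those, of maximal dimension. This one is unique, since the lattice of subrepresentations and the modularity of $\dim$ make maximizers closed under sums. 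It is therefore stable under complex conjugation and hence defined over $\R$, which gives the required real $W$.
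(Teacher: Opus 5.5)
Your proposal is correct. For $\F=\R$ it has the same architecture as the paper's Appendix~\ref{sec:ss-R}:
\begin{itemize}
\item necessity via the scalar one-parameter subgroup, and via the one acting by $t$ on $W(i)$ and by $1$ on a complement;
\item sufficiency by noting that the real orbit sits inside the complex one, applying the criterion to $V_\C$, and descending a destabilizing complex subrepresentation to $\R$ using conjugation invariance and Proposition~\ref{prop:complexification-subsp}.
\end{itemize}

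Two things differ from the paper.

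\textbf{The complex case.} The paper does not prove it; it simply cites King. You reconstruct King's argument: a destabilizing one-parameter subgroup, whose weight filtration $F_{\ge n}$ is a chain of subrepresentations, followed by Abel summation. This is right. The ``relative'' Hilbert--Mumford step you flag as the main obstacle reduces to the standard one. The set $\overline{G\cdot(V,1)}\cap(\Rep(Q,\alpha)\times\{0\})$ is nonempty, closed and $G$-stable, so it contains a closed orbit. That orbit is then the unique closed orbit in $\overline{G\cdot(V,1)}$, and the Richardson--Birkes--Kempf form of the criterion gives a one-parameter subgroup whose limit lies in it.

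\textbf{The descent to $\R$.} The paper takes an arbitrary complex $W$ with $\sigma(\dimv W)>0$. By modularity of $\sigma(\dimv\,\cdot\,)$, at least one of the conjugation-stable subrepresentations $W\cap\bar W$ and $W+\bar W$ still has positive weight. You instead take the unique inclusion-maximal maximizer of $\sigma(\dimv\,\cdot\,)$, which is conjugation-stable by uniqueness. Both arguments rest on the same modularity identity. The paper's needs no optimization. Yours produces a canonical real maximizer and shows directly that the real and complex maxima of $\sigma(\dimv W)$ agree. That is the form relevant to finding maximizing subrepresentations over $\F$, as in Theorem~\ref{thm:reduction}.

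\textbf{A sign slip in the necessity step.} Under $(g\cdot V)(a)=g_{ha}V(a)g_{ta}^{-1}$ and the paper's convention $g\cdot(V,z)=(g\cdot V,\chi_\sigma(g)z)$, $g$ must act by $t$ (not $t^{-1}$) on $W(i)$, with $t\to0$. This scales the block $U(ta)\to W(ha)$ by $t$ and gives $\chi_\sigma(g)=t^{\sigma(\dimv W)}\to0$. Relatedly, the vanishing block of $V(a)$ is $W(ta)\to U(ha)$, not $U\to W$.
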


We say that $V$ is \emph{$\sigma$-stable} if it is $\sigma$-semistable and $\sigma(\dimv W) < 0$ for every nonzero proper subrepresentation $W$ of $V$.
For subrepresentations $W, W' \leq V$, we can define other subrepresentations $W + W'$ and $W \cap W'$ by $(W + W')(i) = W(i) + W'(i)$ and $(W \cap W')(i) = W(i) \cap W'(i)$ for $i \in Q_0$.
Hence, the set of all subrepresentations of $V$ forms a lattice, more precisely, a modular lattice.

A previous work~\cite{Iwamasa2025} devised a deterministic algorithm that decides the $\sigma$-semistability of a given representation $V$ of an acyclic quiver $Q$ in time polynomial in the size of $Q$, $V$, and the largest absolute value of $\sigma$.
They also provided an algorithm that finds a subrepresentation $W \leq V$ that maximizes $\sigma(\dimv W)$ with a similar time complexity.

\section{Networked LTI systems and quiver representations}\label{sec:control}
In this section, we study networked LTI systems using quiver representations.
This section is organized as follows.
In Section~\ref{subsec:pre-LTI}, we review basic concepts of LTI systems and control theory.
Section~\ref{subsec:Bader} outlines the result of Bader~\cite{Bader2008} on the connection between LTI systems and quiver representations.
In Section~\ref{subsec:networked-LTI}, we introduce our new notion of controllability and observability for networked LTI systems based on quiver representations.
In Section~\ref{subsec:Kalman-quiver}, we introduce a new Kalman-type decomposition of networked LTI systems.
Section~\ref{subsec:networked-LTI-alg} discusses algorithms for deciding the controllability and observability of networked LTI systems and computing the Kalman-type decomposition.
Finally, Section~\ref{subsec:target} discusses extensions to target controllability and observability.

\subsection{Preliminaries on LTI systems and control theory}\label{subsec:pre-LTI}
We first review basic concepts of LTI systems and control theory.
For more details, see, e.g., \cite{Antsaklis2007book}.
Consider the following LTI system:
\begin{align*}
    \dot \bx(t) &= A \bx(t) + B \bu(t),\\
    \by(t) &= C \bx(t),
\end{align*}
where $\bx(t) \in \R^n, \bu(t) \in \R^m, \by(t) \in \R^p$ are the state, input, and output vectors at time $t$, respectively, $\dot\bx(t)$ denotes the differentiation of $\bx(t)$ with respect to time $t$, and $A \in \R^{n \times n}, B \in \R^{n \times m}, C \in \R^{p \times n}$ are the system matrices. 
The LTI system is said to be \emph{controllable} if for any initial state $\bx(0)$, there exists an input $\bu(t)$ that drives the state $\bx(t)$ to the zero state in finite time.
It is well-known that the LTI system is controllable if and only if the \emph{controllability matrix} 
\begin{align}\label{eq:c-mat}
\begin{bmatrix}
    B & AB & A^2B & \cdots & A^{n-1}B
\end{bmatrix}
\end{align}
is full row rank.
Hence, the controllability of the system depends only on the pair of matrices $(A, B)$.
Sometimes we simply say that the pair $(A, B)$ is controllable.
It is easy to see that $(A, B)$ is uncontrollable if and only if there exists a nonsingular matrix $g \in \GL(n; \R)$ such that
\begin{align}\label{eq:co-block-decomp}
    gAg^{-1} = 
    \begin{bNiceMatrix}[first-row,first-col] 
            & n_1    & n_2  \\
        n_1 & A_{11} & A_{12} \\ 
        n_2 & O & A_{22} 
    \end{bNiceMatrix}, \quad
    gB = \begin{bNiceMatrix}[first-row,first-col] 
            & m  \\
        n_1 & B_1 \\ 
        n_2 & O 
    \end{bNiceMatrix},
\end{align}
where $A_{11} \in \R^{n_1 \times n_1}, A_{12} \in \R^{n_1 \times n_2}, A_{22} \in \R^{n_2 \times n_2}$, and $B_1 \in \R^{n_1 \times m}$ with $n_1 + n_2 = n$ and $n_2 > 0$.
The image of the controllability matrix is called the \emph{controllable subspace} of the system and is denoted by $\Wc$.

The dual concept of controllability is \emph{observability}.
The LTI system is said to be \emph{observable} if for any initial state $\bx(0)$, there exists a finite time $t^* > 0$ such that $\bx(0)$ can be uniquely determined from $\bu(t), \by(t)$ ($0 \leq t \leq t^*$).
It is well-known that the LTI system is observable if and only if the \emph{observability matrix} 
\begin{align}\label{eq:o-mat}
\begin{bmatrix} C \\ CA \\ CA^2 \\ \vdots \\ CA^{n-1} \end{bmatrix}
\end{align}
is full column rank.
Since the observability of the system depends only on the pair of matrices $(A, C)$, we also say that the pair $(A, C)$ is observable.
The kernel of the observability matrix is called the \emph{unobservable subspace} of the system, which we denote by $\Wno$.
By comparing the above two characterizations, we can see that $(A, C)$ is observable if and only if $(A^\top, C^\top)$ is controllable.
This is known as the \emph{duality between controllability and observability}.

The controllable and unobservable subspaces together induce the canonical decomposition of the LTI system, known as the \emph{Kalman decomposition}.
That is, there exists a nonsingular matrix $g \in \GL(n; \R)$ such that 
\begin{align*}
    gAg^{-1} &= \begin{bNiceMatrix}[first-row, first-col] 
            & \Wcno & \Wco & \Wncno & \Wnco \\
    \Wcno   & * & * & * & * \\
    \Wco    & O & * & O & * \\
    \Wncno  & O & O & * & * \\
    \Wnco   & O & O & O & *
    \end{bNiceMatrix},  &
    gB &= \begin{bNiceMatrix}[first-col] 
        \Wcno  & * \\
        \Wco   & * \\ 
        \Wncno & O \\ 
        \Wnco  & O 
    \end{bNiceMatrix}, &
    C g^{-1} &= \begin{bNiceMatrix}[first-row] 
        \Wcno & \Wco & \Wncno & \Wnco \\
        O & * & O & *
    \end{bNiceMatrix},
\end{align*}
where 
$\Wcno = \Wc \cap \Wno$, $\Wco = \Wc / (\Wc \cap \Wno)$, $\Wncno = \Wno / (\Wc \cap \Wno)$, $\Wnco = \R^n / (\Wc + \Wno)$.
Note that these four subspaces naturally arise from the following pair of chains of $A$-invariant subspaces:
\begin{center}
\begin{tikzpicture}
    \node (Zero) at (0, -2) {$\{\zeros\}$};
    \node (Wcno) at (0, -1) {$\Wc \cap \Wno$};
    \node (Wc) at (-1, 0) {$\Wc$};
    \node (Wno) at (+1, 0) {$\Wno$};
    \node (Wc+Wno) at (0, 1) {$\Wc + \Wno$};
    \node (R) at (0, 2) {$\R^n$};
    \graph{(Zero) -> (Wcno) -> {(Wc), (Wno)} -> (Wc+Wno) -> (R);};
\end{tikzpicture}
\end{center}

\subsection{LTI systems and representations of marked quivers}\label{subsec:Bader}
Bader~\cite{Bader2008} pointed out that the controllability and observability of LTI systems are closely related to the $\sigma$-semistability of quiver representations.
To describe the connection formally, we need to introduce a slightly generalized version of $\sigma$-semistability.

Let $Q = (Q_0, Q_1)$ be a quiver and $M \subseteq Q_0$ be a vertex subset.
The vertices in $M$ and $Q_0 \setminus M$ are said to be \emph{marked} and \emph{unmarked}, respectively.
Let $\sigma: M \to \Z$ be an integer weight supported on $M$. 
A representation $V$ over $\F$ of a marked quiver $Q$ with a dimension vector $\alpha$ is said to be \emph{$\sigma$-semistable} if it is $\sigma$-semistable under the action of a subgroup $\GL(M, \alpha; \F) := \prod_{i \in M}\GL(\alpha(i); \F)$ of $\GL(Q, \alpha; \F)$; see Remark~\ref{rem:marked-ss} for details.
Bader presented a generalization of King's criterion for this setting:

\begin{lemma}[generalized King's criterion \cite{Bader2008}]\label{lem:king-marked}
    Let $\F = \C$ or $\F = \R$ be the underlying field.\footnote{Bader~\cite{Bader2008} proved this result only for $\F = \C$, but his proof also works for $\F = \R$ without modification.}
    A representation $V$ over $\F$ of a marked quiver $Q$ is $\sigma$-semistable if and only if the following two conditions hold:
    \begin{enumerate}
        \item $\sigma(\dimv W) \leq 0$ for every subrepresentation $W \leq V$ with $W(i) = \{\zeros\}$ for each unmarked vertex $i \in Q_0 \setminus M$, and
        \item $\sigma(\dimv W) \leq \sigma(\dimv V)$ for every subrepresentation $W \leq V$ with $W(i) = V(i)$ for each unmarked vertex $i \in Q_0 \setminus M$.
    \end{enumerate}
\end{lemma}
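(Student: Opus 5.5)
We prove Lemma~\ref{lem:king-marked} with the Hilbert--Mumford criterion for the reductive group $G := \GL(M, \alpha)$, which acts on $\Rep(Q,\alpha)$ with character $\chi_\sigma$. As in King's original argument, $V$ fails to be $\sigma$-semistable exactly when some one-parameter subgroup $\lambda$ of $G$ makes $\lim_{t\to 0}\lambda(t)\cdot V$ exist while $\langle \chi_\sigma,\lambda\rangle > 0$; with $t^{\langle\chi_\sigma,\lambda\rangle}\to 0$ this sends $(V,1)$ into the zero section. Over $\R$, real one-parameter subgroups suffice, in the same way as for Lemma~\ref{lem:king}; see Appendix~\ref{sec:ss-R}. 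The task is then to translate the condition ``$\lim_{t \to 0} \lambda(t)\cdot V$ exists'' into filtrations by subrepresentations.

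A one-parameter subgroup $\lambda$ of $G$ amounts to giving each marked vertex $i\in M$ a weight decomposition $V(i)=\bigoplus_{k\in\Z}V(i)_k$. Unmarked vertices carry no action, so we may regard them as concentrated in weight $0$. Put $W_k(i)=\bigoplus_{l\ge k}V(i)_l$ for $i\in M$, and $W_k(i)=V(i)$ for $k\le 0$, $W_k(i)=\{\zeros\}$ for $k>0$ when $i\notin M$.

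\emph{Existence of the limit.} The limit exists if and only if every arc map $V(a)$ carries $W_k(ta)$ into $W_k(ha)$ for all $k$. So each $W_k$ is a subrepresentation. For $k>0$ it vanishes at unmarked vertices, and for $k\le 0$ it is full at unmarked vertices.

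\emph{Computing the pairing.} Writing $\sigma(\dimv W_k)$ as a telescoping sum over the weights gives
\[
\langle\chi_\sigma,\lambda\rangle=\sum_{k>0}\sigma(\dimv W_k)+\sum_{k\le 0}\bigl(\sigma(\dimv W_k)-\sigma(\dimv V)\bigr).
\]
This uses that $\sigma$ is supported on $M$, so unmarked vertices contribute nothing.

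\emph{Sufficiency.} Under conditions 1 and 2, each term of this sum is $\le 0$. Hence no destabilizing $\lambda$ exists, and $V$ is $\sigma$-semistable.

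\emph{Necessity.} First suppose a subrepresentation $W$ is zero on the unmarked vertices and has $\sigma(\dimv W)>0$. Choose a complement $U(i)$ of $W(i)$ for each $i\in M$, and let $\lambda(t)$ act by $t$ on $W(i)$ and by $1$ on $U(i)$. The resulting filtration is $W_1=W$ and $W_k=V$ for $k\le 0$. The limit exists, and the pairing equals $\sigma(\dimv W)>0$, so $V$ is not $\sigma$-semistable. Now suppose instead that $W$ is full on the unmarked vertices and $\sigma(\dimv W)>\sigma(\dimv V)$. Let $\lambda$ act by $1$ on $W(i)$ and by $t^{-1}$ on a complement. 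The filtration becomes $W_0=W$ and $W_{-1}=V$, and the pairing equals $\sigma(\dimv W)-\sigma(\dimv V)>0$. In both cases the invariance of $W$ is exactly what makes the limit exist.

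The main obstacle is the Hilbert--Mumford step: showing that non-semistability in the sense of the orbit closure meeting the zero section yields a one-parameter subgroup. The subtlety is that $G$ is only a subgroup of $\GL(Q,\alpha)$, and the quiver may have cycles, so it is not a null-cone statement. We handle it as King does, by applying the Hilbert--Mumford criterion to the $G$-action on $\Rep(Q,\alpha)\oplus\chi_\sigma$ and the closed $G$-stable set $\Rep\times\{0\}$. This is a closed-orbit version of the criterion, valid for any reductive $G$. Over $\R$ we take it from the appendix argument.
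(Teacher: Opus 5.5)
Your argument is correct. It is King's Hilbert--Mumford proof transplanted to $G=\GL(M,\alpha)$. Putting the unmarked vertices in weight $0$ is exactly what splits the telescoped pairing at $k=0$ into the two families of conditions.

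The paper gives no proof of this lemma. It cites Bader and adds a footnote saying his argument runs over $\R$ without modification. So over $\C$ your route is the expected one.

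The one place you take a different path, and where your write-up is loose, is the real case. For a Hilbert--Mumford argument, ``works over $\R$ without modification'' means invoking a real version of the criterion, i.e.\ real orbit closures and $\R$-split one-parameter subgroups, as in Birkes' theorem. You instead point to Appendix~\ref{sec:ss-R}. That appendix neither proves a real Hilbert--Mumford criterion nor treats marked quivers. It deduces the real statement from King's criterion over $\C$, by complexifying and then descending subrepresentations via conjugation.

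To use it here, you must redo that descent in the marked setting:
\begin{enumerate}
\item Suppose $W\le V_\C$ violates condition~1 (resp.~2). Then $\bar W$, $W\cap\bar W$ and $W+\bar W$ are subrepresentations that are still zero (resp.\ full) at the unmarked vertices.
\item The last two are conjugation-stable. By Proposition~\ref{prop:complexification-subsp} they are complexifications of real subrepresentations.
\item The map $W\mapsto\sigma(\dimv W)$ is modular, so one of them violates the same condition as $W$.
\item Real semistability then follows from complex semistability. The $\GL(M,\alpha;\R)$-orbit closure of $(V,1)$ lies inside the $\GL(M,\alpha;\C)$-orbit closure.
\end{enumerate}
Necessity over $\R$ needs nothing extra, since your destabilizing one-parameter subgroups are already real.

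With that step written out, your proof is self-contained and uses only the complex Hilbert--Mumford criterion. The paper's footnote route is shorter but rests on the real form of the criterion.
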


\begin{remark}\label{rem:marked-ss}
    Formally, the concept of $\sigma$-semistability is adapted to a marked quiver $Q$ by referring to the subgroup $\GL(M, \alpha; \F)$ and the restricted character $\chi^M_\sigma(g) := \prod_{i \in M} \det(g_i)^{\sigma(i)}$.
    That is, a representation $V$ over $\F$ of a marked quiver $Q$ with a dimension vector $\alpha$ is said to be \emph{$\sigma$-semistable} if the orbit closure of $(V, 1) \in \Rep(Q, \alpha; \F) \oplus \chi^M_\sigma$ under the $\GL(M, \alpha; \F)$-action does not intersect with $\Rep(Q, \alpha; \F) \times \{0\}$.
    In what follows, however, we do not use this definition of $\sigma$-semistability for marked quivers;
    we only need the generalized King's criterion (Lemma~\ref{lem:king-marked}).
\end{remark}

For $M = Q_0$, Lemma~\ref{lem:king-marked} reduces to the original King's criterion (Lemma~\ref{lem:king}).
We emphasize that Lemma~\ref{lem:king-marked} even applies to marked quivers with cycles.
This generality plays a substantial role in Section~\ref{subsec:networked-LTI}.

We say that a representation $V$ of a marked quiver $Q$ is \emph{$\sigma$-stable} if
(i) it is $\sigma$-semistable; 
(ii) $\sigma(\dimv W) < 0$ for every nonzero proper subrepresentation $W$ of $V$ with $W(i) = \{\zeros\}$ for each unmarked vertex $i \in Q_0 \setminus M$;
and (iii) $\sigma(\dimv W) < \sigma(\dimv V)$ for every nonzero proper subrepresentation $W$ of $V$ with $W(i) = V(i)$ for each unmarked vertex $i \in Q_0 \setminus M$.

\begin{figure}
    \centering
    \includegraphics[width=0.2\textwidth]{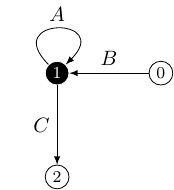}
    \caption{The quiver for LTI systems.\label{fig:LTI-quiver}}
\end{figure}

Consider the marked quiver $Q$ in Figure~\ref{fig:LTI-quiver} with $Q_0 = \{0, 1, 2\}$ and $M = \{1\}$.
An LTI system $(A, B, C)$ can be seen as a real representation $V$ of $Q$.
A weight $\sigma$ on the single marked node $1$ can be regarded as an integer.
The controllability and observability of the LTI system can be characterized in terms of the $\sigma$-semistability of $V$ as follows.

\begin{lemma}[\cite{Bader2008}]\label{lem:ss-LTI}
    \quad
    \begin{enumerate}
        \item When $\sigma \leq -1$, $(A, B)$ is controllable if and only if $V$ is $\sigma$-semistable;
        \item When $\sigma \geq 1$, $(A, C)$ is observable if and only if $V$ is $\sigma$-semistable; and
        \item When $\sigma = 0$, an LTI system $(A, B, C)$ is controllable and observable if and only if $V$ is $\sigma$-stable.
    \end{enumerate}
\end{lemma}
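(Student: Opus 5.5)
The plan is to make the quiver of Figure~\ref{fig:LTI-quiver} explicit and then simply evaluate the two conditions of the generalized King's criterion (Lemma~\ref{lem:king-marked}). I read the quiver as follows. The unmarked vertex $0$ carries $V(0)=\R^m$. The marked vertex $1$ carries $V(1)=\R^n$ and a self-loop with $V(\text{loop})=A$. The unmarked vertex $2$ carries $V(2)=\R^p$. The two remaining arcs are $0\to 1$ with map $B$ and $1\to 2$ with map $C$. Since $M=\{1\}$, we have $\sigma(\dimv W)=\sigma\dim W(1)$ for every subrepresentation $W$.

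The first step is to identify the two families of subrepresentations appearing in Lemma~\ref{lem:king-marked}.

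\emph{Family~1.} These are the $W$ with $W(0)=W(2)=\{\zeros\}$. The subrepresentation conditions reduce to $AW(1)\leq W(1)$ and $CW(1)\leq\{\zeros\}$, while $BW(0)=\{\zeros\}\leq W(1)$ holds trivially. So $W(1)$ ranges over the $A$-invariant subspaces contained in $\ker C$. The largest of these is the unobservable subspace $\Wno$, since $\Wno$ is the largest $A$-invariant subspace inside $\ker C$.

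\emph{Family~2.} These are the $W$ with $W(0)=\R^m$ and $W(2)=\R^p$. Now the conditions reduce to $AW(1)\leq W(1)$ and $\im B\leq W(1)$, while $CW(1)\leq\R^p$ holds trivially. So $W(1)$ ranges over the $A$-invariant subspaces containing $\im B$. The smallest of these is the controllable subspace $\Wc$.

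The second step is to translate the criterion for each sign of $\sigma$.

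If $\sigma\leq -1$, condition~1 holds automatically, because $\sigma\dim W(1)\leq 0$ always. Condition~2 reads $\dim W(1)\geq n$ for all $W(1)$ in Family~2. Since the minimum over that family is attained at $\Wc$, this is equivalent to $\dim\Wc=n$, i.e., to controllability of $(A,B)$.

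If $\sigma\geq 1$, condition~2 holds automatically, because $\dim W(1)\leq n$. Condition~1 reads $\dim W(1)\leq 0$ for all $W(1)$ in Family~1, i.e., $\Wno=\{\zeros\}$. This is observability of $(A,C)$.

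If $\sigma=0$, semistability holds trivially, so only the strict conditions (ii) and (iii) in the definition of $\sigma$-stability matter. Both strict inequalities read $0<0$, which is false. Hence $\sigma$-stability means that neither family contains a nonzero proper subrepresentation.
\begin{itemize}
\item For Family~1, a member $W$ is nonzero exactly when $W(1)\neq\{\zeros\}$. Such a $W$ is proper as soon as $W(1)\neq\R^n$ or $m+p>0$. So requiring that no such $W$ exist forces $\Wno=\{\zeros\}$.
\item For Family~2, a member $W$ is proper exactly when $W(1)\neq\R^n$. Such a $W$ is nonzero unless $W(1)=\{\zeros\}$ and $m=p=0$. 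So requiring that no such $W$ exist forces $\Wc=\R^n$.
\end{itemize}
Conversely, if $\Wno=\{\zeros\}$ and $\Wc=\R^n$, then the only members of the two families are the trivial ones, and neither of these is a nonzero proper subrepresentation. This gives item~3.

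I expect the only delicate step to be this last bookkeeping for $\sigma=0$, namely the degenerate cases. Examples are $m=p=0$, where the subrepresentation with $W(1)=\R^n$ in Family~1 is not proper, and $n=0$. I would dispose of these separately, either by assuming the standing nondegeneracy $m,p,n\geq 1$ or by checking directly that both sides of the equivalence agree there. Everything else is immediate from Lemma~\ref{lem:king-marked} together with the standard extremal characterizations of $\Wc$ and $\Wno$.
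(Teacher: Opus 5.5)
Your argument for items~1 and~2 is essentially the paper's own justification. You apply Lemma~\ref{lem:king-marked}, observe that one of its two conditions is vacuous by the sign of $\sigma$, and identify the extremal member of the remaining family as the controllable subspace $\sum_k A^k \im B$ or as the unobservable subspace (the largest $A$-invariant subspace of $\ker C$).

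For item~3 the paper gives no argument and only cites Bader. Your two-family bookkeeping is correct. However, the degenerate case cannot be handled by ``checking directly that both sides agree'', because item~3 is false when $m=p=0$ and $n\ge 1$. For example, take $n=1$: both families consist only of the zero subrepresentation and $V$ itself, so $V$ is $0$-stable, yet $(A,B)$ is not controllable. You must therefore commit to the hypothesis $m+p\ge 1$. That hypothesis alone suffices; the case $n=0$ is harmless.
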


To gain intuition for the lemma, let us first focus on the controllability of $(A, B)$.
Since $\sigma \leq -1$, the first condition in the generalized King's criterion (Lemma~\ref{lem:king-marked}) is met trivially.
The inequality of the second condition $\sigma(\dimv W) \leq \sigma(\dimv V)$ is equivalent to $\dim W(1) \geq n$, where $1$ is the unique marked vertex in $Q$.
Therefore, it suffices to check a subrepresentation $W$ with $W(0) = \R^m$, $W(2) = \R^p$, and $\dim W(1)$ minimum.
It is easy to see that such $W$ yields
\begin{align*}
    W(1) = \sum_{k=0}^\infty A^k \im B = \sum_{k=0}^{n-1} A^k \im B.
\end{align*} 
Note that $W(1)$ is the minimum $A$-invariant subspace containing $\im B$, which equals the controllable subspace.
Thus, the $\sigma$-semistability of $V$ is equivalent to the controllability matrix \eqref{eq:c-mat} being full row rank.

Similarly, consider the observability of $(A, C)$.
Since $\sigma \geq 1$, the second condition in the generalized King's criterion is met trivially.
The inequality of the first condition reads $\dim W(1) \leq 0$.
Therefore, it suffices to consider a subrepresentation $W$ with $W(0) = \{\zeros\}$, $W(2) = \{\zeros\}$, and $\dim W(1)$ maximum.
Such a subspace $W(1)$ is the maximum $A$-invariant subspace contained in $\ker C$, which equals the unobservable subspace.
Thus, the $\sigma$-semistability of $V$ is equivalent to the observability matrix \eqref{eq:o-mat} being full column rank.

Another interesting aspect of the lemma is that the duality between controllability and observability is naturally encoded as the duality between the $\sigma$-semistability of $V$ and the $(-\sigma)$-semistability of $V^\top$~\cite{Stupariu2014}.

In the next section, we will extend the connection between $\sigma$-semistability and controllability/observability to networked LTI systems.

\subsection{Networked LTI systems}\label{subsec:networked-LTI}
In many real-world applications, we often consider a \emph{networked} LTI system, a system consisting of interconnected small LTI subsystems.
Networked LTI systems are also called \emph{network-of-networks} or \emph{networked MIMO systems} in the literature~\cite{Chapman2014,Wang2016a}.

Formally, a networked LTI system is defined as follows.
Let $N = (N_0, N_1)$ be a quiver, where $N_0$ is the set of subsystems and $N_1$ is the set of interconnecting arcs between subsystems.
We assume that $N$ has no self-loops.
Let $\alpha \in \Z^{N_0}$ be a dimension vector representing the state dimensions of the subsystems.
Each subsystem $i \in N_0$ may have a direct input of dimension $m(i)$ and a direct output of dimension $p(i)$.
We remark that not all subsystems need to have direct inputs or outputs.
Let $\Nin_0$ and $\Nout_0$ be the sets of subsystems with direct inputs and outputs, respectively.
Note that $\Nin_0 \cap \Nout_0$ may be nonempty.
We have the following four types of system matrices:
\begin{itemize}
    \item \emph{[local dynamics]} $A(i) \in \R^{\alpha(i) \times \alpha(i)}$ for each subsystem $i \in N_0$,
    \item \emph{[local input]} $B(i) \in \R^{\alpha(i) \times m(i)}$ for each subsystem $i \in \Nin_0$ with a direct input of dimension $m(i)$,
    \item \emph{[local output]} $C(i) \in \R^{p(i) \times \alpha(i)}$ for each subsystem $i \in \Nout_0$ with a direct output of dimension $p(i)$, and
    \item \emph{[interconnection dynamics]} $V(a) \in \R^{\alpha(ha) \times \alpha(ta)}$ for each interconnecting arc $a \in N_1$.
\end{itemize}
The state-space representation of the networked LTI system is as follows:
\begin{equation}\label{eq:networked-LTI}
\begin{aligned}
    \dot \bx_i &= 
    A(i) \bx_i + \sum_{a \in N_1: ha = i} V(a) \bx_{ta} + B(i) \bu_i 
    & & (i \in \Nin_0), \\
    \dot \bx_i &= 
    A(i) \bx_i + \sum_{a \in N_1: ha = i} V(a) \bx_{ta} 
    & & (i \in N_0 \setminus \Nin_0),  \\
    \by_i      &= 
    C(i) \bx_i 
    & & (i \in \Nout_0),
\end{aligned}
\end{equation}
where $\bx_i = \bx_i(t)$, $\bu_i = \bu_i(t)$, and $\by_i = \by_i(t)$ are the state, input, and output vectors of subsystem $i$, respectively.

Of course, a networked LTI system is just a (very high-dimensional) LTI system.
Therefore, we can define the controllability, observability, and Kalman decomposition of a networked LTI system in the same way as for ordinary LTI systems.
However, such naive definitions ignore the underlying network structure.
For example, the controllable and unobservable subspaces may be mixtures of several state vectors from different subsystems.
Consequently, the Kalman decomposition does not necessarily respect the network structure.
Therefore, these control-theoretic concepts are often difficult to interpret.
We can overcome this issue by formulating new notions of controllability, observability, and Kalman decomposition for networked LTI systems, based on $\sigma$-semistability of quiver representations.

To be specific, we will construct another marked quiver $Q$ from $N$ as follows.
First, we start with $N$ and mark each vertex $i \in N_0$.
Second, we place a single self-loop at each subsystem $i \in N_0$.
Third, for each subsystem $i \in \Nin_0$, we create a new unmarked node $s_i$ and add an arc from $s_i$ to $i$.
Finally, for each subsystem $i \in \Nout_0$, we create a new unmarked vertex $o_i$ and add an arc from $i$ to $o_i$.
Let $Q$ be the resulting quiver.
Note that 
\[
    Q_0 = N_0 \cup \{ s_i \mid i \in \Nin_0 \} \cup \{ o_i \mid i \in \Nout_0 \}
\]
and that the set of marked vertices of $Q$ is exactly $N_0$.
See Figure~\ref{fig:networked-LTI-quiver} for an example of the construction.

A representation of $Q$ consists of the following four types of matrices:
\begin{itemize}
    \item $A(i)$ for the self-loop at each subsystem $i \in N_0$,
    \item $B(i)$ for each subsystem $i \in \Nin_0$,
    \item $C(i)$ for each subsystem $i \in \Nout_0$, and
    \item $V(a)$ for each interconnecting arc $a \in N_1$.
\end{itemize}
Thus, there is a natural one-to-one correspondence between a networked LTI system with the underlying network topology $N$ and a representation $V$ of the marked quiver $Q$.

\begin{figure}
    \centering
    \subcaptionbox{\label{subfig:networked-LTI}}{\includegraphics[width=0.3\textwidth]{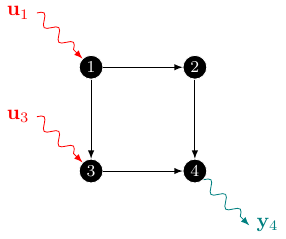}}
    \hspace{3em}
    \subcaptionbox{\label{subfig:quiver-repr}}{\includegraphics[width=0.3\textwidth]{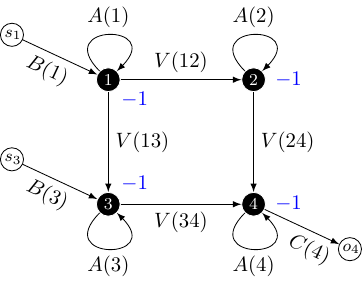}}
    \caption{(\subref{subfig:networked-LTI}) A networked LTI system. The underlying network quiver $N$ is shown in the black nodes and arcs. The wave arcs indicate direct inputs to subsystems 1 and 3 (in \textcolor{red}{red}) and a direct output from subsystem 4 (in \textcolor{teal}{green}), respectively. 
    (\subref{subfig:quiver-repr}) The corresponding marked quiver $Q$, the representation $V$, and the weight $\sigma$ on $N_0$. The values of $\sigma(i) = -1$ are attached to each vertex in $N_0$ (in \textcolor{blue}{blue}). \label{fig:networked-LTI-quiver}}
\end{figure}

We now define our new notion of network-respecting controllability for networked LTI systems.

\begin{definition}[Network-respecting controllability of networked LTI systems]\label{def:nc}
We say that a networked LTI system \eqref{eq:networked-LTI} is \emph{network-respecting controllable} if the corresponding representation $V$ of the marked quiver $Q$ is $\sigma$-semistable, where $\sigma$ is a weight defined by $\sigma(i) = -1$ ($i \in N_0$) and $\sigma(i) = 0$ ($i \in Q_0 \setminus N_0$).
\end{definition}

By Lemma~\ref{lem:ss-LTI}, the standard controllability of the networked LTI system is a special case of the network-respecting controllability in which $Q$ is the quiver in Figure~\ref{fig:LTI-quiver}.

Let us use the generalized King's criterion (Lemma~\ref{lem:king-marked}) to check the network-respecting controllability of a given networked LTI system.
Since $\sigma$ is nonpositive, the first condition of the generalized King's criterion is trivially satisfied.
Therefore, a networked LTI system is network-respecting controllable if and only if $\sigma(\dimv W) \leq \sigma(\dimv V)$ for every subrepresentation $W \leq V$ with $W(i) = V(i)$ for each unmarked $i \in Q_0 \setminus N_0$.
Let $\caL_c$ be the lattice of all subrepresentations $W \leq V$ with $W(i) = V(i)$ ($i \in Q_0 \setminus N_0$).
By the modular lattice structure of subrepresentations, there exists a unique minimum subrepresentation in $\caL_c$.
We call this minimum subrepresentation the \emph{controllable subrepresentation} and denote it by $\Wc$.
Then, the network-respecting controllability is equivalent to $\sigma(\dimv \Wc) \leq \sigma(\dimv V)$ since $\sigma$ is nonpositive.
This inequality reads
\[
    -\sum_{i \in N_0} \dim \Wc(i) \leq -\sum_{i \in N_0} \dim V(i),
\]
which is equivalent to $\dim \Wc(i) = \dim V(i)$ for every $i \in N_0$.
In the case of the quiver in Figure~\ref{fig:LTI-quiver}, the subspace $\Wc(1)$ of the controllable subrepresentation $\Wc$ is exactly the controllable subspace of the LTI system.
We record this observation in the following lemma.

\begin{lemma}\label{lem:nc-Wc}
    A networked LTI system \eqref{eq:networked-LTI} is network-respecting controllable if and only if the controllable subrepresentation $\Wc$ satisfies $\Wc(i) = V(i)$ for every subsystem $i \in N_0$.
\end{lemma}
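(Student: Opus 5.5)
The plan is to apply the generalized King's criterion (Lemma~\ref{lem:king-marked}) to the weight $\sigma$ of Definition~\ref{def:nc}, and then reduce the resulting family of inequalities to a single one via the lattice $\caL_c$.

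\emph{Step 1 (the first condition is vacuous).} Since $\sigma(i)=-1$ for every $i\in N_0$ and $\sigma$ vanishes on the unmarked vertices, we have $\sigma(\dimv W) = -\sum_{i\in N_0}\dim W(i) \le 0$ for every subrepresentation $W$. Hence condition 1 of Lemma~\ref{lem:king-marked} always holds, and network-respecting controllability is equivalent to condition 2: $\sigma(\dimv W)\le\sigma(\dimv V)$ for all $W\in\caL_c$.

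\emph{Step 2 (reduce to the minimum element).} I will first check that $\caL_c$ is closed under intersection. If $W,W'\in\caL_c$, then $W\cap W'$ is a subrepresentation, and $(W\cap W')(i)=V(i)$ at every unmarked vertex. Since $\caL_c$ is finite-dimensionally bounded, intersecting finitely many elements that realize the minimal dimensions yields a unique minimum $\Wc$; this is the fact the text already asserts. The map $W\mapsto\sigma(\dimv W)$ is order-reversing on $\caL_c$, because $W\le W'$ implies $\dim W(i)\le\dim W'(i)$ and the weights are nonpositive. So the maximum of $\sigma(\dimv W)$ over $\caL_c$ is attained at $\Wc$, and condition 2 is equivalent to the single inequality $\sigma(\dimv\Wc)\le\sigma(\dimv V)$.

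\emph{Step 3 (translate into equality of subspaces).} This inequality reads $\sum_{i\in N_0}\dim\Wc(i)\ge\sum_{i\in N_0}\dim V(i)$. Because $\Wc(i)\le V(i)$ termwise, it holds if and only if $\dim\Wc(i)=\dim V(i)$ for every $i\in N_0$, that is, $\Wc(i)=V(i)$.

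None of these steps is a serious obstacle. The only point needing care is Step 2: justifying that the minimum of $\caL_c$ exists and that it maximizes $\sigma(\dimv\,\cdot\,)$. Closure under intersection handles this.
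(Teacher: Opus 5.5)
Your proposal is correct and follows the paper's argument step for step: you discard the first condition of Lemma~\ref{lem:king-marked} because $\sigma\le 0$, reduce the second condition to the minimum element $\Wc$ of $\caL_c$, and translate $\sigma(\dimv\Wc)\le\sigma(\dimv V)$ into $\Wc(i)=V(i)$ for all $i\in N_0$. Your explicit justification that $\Wc$ exists and maximizes $\sigma(\dimv\,\cdot\,)$ fills in what the paper covers only with the phrase ``modular lattice structure'' and the remark that $\sigma$ is nonpositive. That justification uses closure of $\caL_c$ under intersection; the cleanest form is to take an element of minimum total dimension and intersect it with any other element of $\caL_c$.
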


The following theorem gives a characterization of network-respecting controllability of networked LTI systems in terms of block decompositions similar to \eqref{eq:co-block-decomp}.

\begin{theorem}\label{thm:block-uptri-co}
    A networked LTI system \eqref{eq:networked-LTI} is not network-respecting controllable if and only if there exist $g \in \GL(N, \alpha; \R)$ and a dimension vector $\beta \leq \alpha$ such that 
    \begin{alignat}{2}
        g_i A(i) g_i^{-1} &=  
        \begin{bNiceMatrix}[first-row,first-col] 
                     & \beta(i) &   \\
            \beta(i) & *   & * \\ 
                     & O   & *
        \end{bNiceMatrix} &\quad& (i \in N_0),
        \label{eq:block-uptri-co-A}
        \\
        g_{ha} V(a) g_{ta}^{-1} &=  
        \begin{bNiceMatrix}[first-row,first-col] 
                      & \beta(ta) &   \\
            \beta(ha) & *   & * \\ 
                      & O   & *
        \end{bNiceMatrix} &\quad& (a \in N_1),
        \label{eq:block-uptri-co-V}
        \\
        g_i B(i) &= 
        \begin{bNiceMatrix}[first-row,first-col] 
                     & m(i) \\
            \beta(i) & *   \\ 
                     & O   
        \end{bNiceMatrix} &\quad& (i \in \Nin_0),
        \label{eq:block-uptri-co-B}
    \end{alignat}
    and $\beta(i) < \alpha(i)$ for some subsystem $i \in N_0$ (with or without a direct input).
\end{theorem}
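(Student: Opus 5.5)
The plan is to reduce the statement to Lemma~\ref{lem:nc-Wc} and translate subrepresentations in $\caL_c$ into simultaneous block upper-triangular forms via adapted bases. The dictionary is as follows. A family of subspaces $W(i) \leq V(i)$ for $i \in N_0$, extended by $W(s_i) = \R^{m(i)}$ and $W(o_i) = \R^{p(i)}$, is a subrepresentation in $\caL_c$ exactly when three conditions hold:
\begin{itemize}
    \item $A(i) W(i) \leq W(i)$ for each $i \in N_0$ (the self-loops);
    \item $V(a) W(ta) \leq W(ha)$ for each $a \in N_1$;
    \item $\im B(i) \leq W(i)$ for each $i \in \Nin_0$ (the arcs $s_i \to i$ with $W(s_i)$ the whole space).
\end{itemize}
The arcs $i \to o_i$ impose no condition, since $W(o_i)$ is the whole space. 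This is why $C(i)$ does not appear in the statement.

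For the ``if'' direction, suppose $g$ and $\beta$ are given as in \eqref{eq:block-uptri-co-A}--\eqref{eq:block-uptri-co-B}. Set $W(i) := g_i^{-1}(\F^{\beta(i)} \times \{\zeros\})$, the span of the first $\beta(i)$ columns of $g_i^{-1}$, so that $\dim W(i) = \beta(i)$.
\begin{itemize}
    \item The zero lower-left block in \eqref{eq:block-uptri-co-A} says $g_i A(i) g_i^{-1}$ preserves the coordinate subspace $\F^{\beta(i)} \times \{\zeros\}$, so $A(i) W(i) \leq W(i)$.
    \item Likewise, \eqref{eq:block-uptri-co-V} gives $V(a) W(ta) \leq W(ha)$.
    \item The zero lower block in \eqref{eq:block-uptri-co-B} gives $\im g_i B(i) \leq \F^{\beta(i)} \times \{\zeros\}$, i.e.\ $\im B(i) \leq W(i)$.
\end{itemize}
Thus $W \in \caL_c$. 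By minimality, $\Wc \leq W$, so $\dim \Wc(i) \leq \beta(i) < \alpha(i)$ for some $i$. By Lemma~\ref{lem:nc-Wc}, the system is not network-respecting controllable.

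For the ``only if'' direction, assume the system is not network-respecting controllable. By Lemma~\ref{lem:nc-Wc}, $\Wc(i) \neq V(i)$ for some $i \in N_0$. Put $\beta(i) := \dim \Wc(i)$, so $\beta \leq \alpha$ with strict inequality somewhere. For each $i \in N_0$, choose a basis of $V(i) = \R^{\alpha(i)}$ whose first $\beta(i)$ vectors span $\Wc(i)$, and let $g_i$ be the inverse of the matrix with these basis vectors as columns. Applying the three invariance conditions of $\Wc$ in these coordinates gives exactly the zero blocks in \eqref{eq:block-uptri-co-A}, \eqref{eq:block-uptri-co-V} and \eqref{eq:block-uptri-co-B}. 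Here $\Wc$ could be replaced by any member of $\caL_c$ with a proper component.

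There is no genuine obstacle. The only points requiring care are:
\begin{itemize}
    \item checking that the coordinate-subspace invariance is equivalent to the zero-block pattern for the rectangular maps $V(a)$ between different vertices, where the row split is $\beta(ha)$ and the column split is $\beta(ta)$;
    \item noting that $g$ ranges only over $\GL(N,\alpha;\R)$, acting trivially on the unmarked vertices, which is consistent with $W(s_i)$ and $W(o_i)$ being the full spaces.
\end{itemize}
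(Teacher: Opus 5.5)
Your proposal is correct and follows essentially the same approach as the paper. The ``only if'' direction uses a basis adapted to $\Wc$, and the ``if'' direction takes the coordinate subrepresentation $W \in \caL_c$; the one cosmetic difference is that you conclude via minimality of $\Wc$ and Lemma~\ref{lem:nc-Wc}, whereas the paper directly notes $\sigma(\dimv W) > \sigma(\dimv V)$, which violates the generalized King's criterion.
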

\begin{proof}
    Suppose that the networked LTI system is not network-respecting controllable.
    Let $\beta$ be the dimension vector of the controllable subrepresentation $\Wc$.
    By choosing $g \in \GL(N, \alpha; \R)$ so that $\Wc(i)$ is the coordinate subspace of the first $\beta(i)$ coordinates of $g_i A(i) g_i^{-1}$ for each $i \in N_0$, we obtain the desired block decomposition.
    Furthermore, Lemma~\ref{lem:nc-Wc} implies that $\beta(i) < \alpha(i)$ for some subsystem $i \in N_0$.

    Conversely, if the above block decomposition exists, then let $W(i)$ be the coordinate subspace of the first $\beta(i)$ coordinates for each $i \in N_0$.
    Then, $W$ forms a subrepresentation in $\caL_c$.
    Since $\dim W(i) = \beta(i) < \alpha(i) = \dim V(i)$ for some $i \in N_0$, we have $\sigma(\dimv W) > \sigma(\dimv V)$, and therefore the networked LTI system is not network-respecting controllable.
\end{proof}

\begin{remark}\label{rem:nc-del}
    As one can see from the theorem, the network-respecting controllability of a networked LTI system \eqref{eq:networked-LTI} depends neither on the set $\Nout_0$ nor on the output matrices $C(i)$ ($i \in \Nout_0$).
    This corresponds to the fact that the ordinary controllability of LTI systems does not depend on the output matrix $C$.
    Therefore, one could alternatively formulate network-respecting controllability using a smaller quiver that is obtained by deleting the unmarked vertices $o_i$ ($i \in \Nout_0$) and the adjacent arcs from $Q$.
    In this paper, however, we use the quiver $Q$ without the deletion because it highlights the duality between network-respecting controllability and observability in a unified manner, and because Bader~\cite{Bader2008} used the quiver without the deletion in his theorem (see Figure~\ref{fig:LTI-quiver} and Lemma~\ref{lem:ss-LTI}).
\end{remark}

\begin{corollary}
    If a networked LTI system \eqref{eq:networked-LTI} is not network-respecting controllable, then it is not controllable as an LTI system.
\end{corollary}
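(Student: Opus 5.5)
We derive the corollary directly from Theorem~\ref{thm:block-uptri-co} by assembling the subsystem-wise block decomposition into a single block decomposition of the aggregated LTI system, of the form \eqref{eq:co-block-decomp}.

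First, we write the networked LTI system \eqref{eq:networked-LTI} as one LTI system $(\mathcal{A}, \mathcal{B})$. The state is $\bx = (\bx_i)_{i \in N_0} \in \R^n$ with $n = \sum_i \alpha(i)$. The matrix $\mathcal{A}$ has diagonal blocks $A(i)$, and its $(ha, ta)$ block is the sum of $V(a)$ over arcs $a$ from $ta$ to $ha$. The matrix $\mathcal{B}$ is block-diagonal with blocks $B(i)$ placed in the rows of subsystem $i$ for $i \in \Nin_0$.

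Second, assuming the system is not network-respecting controllable, Theorem~\ref{thm:block-uptri-co} gives $g = (g_i) \in \GL(N, \alpha; \R)$ and $\beta$ satisfying \eqref{eq:block-uptri-co-A}--\eqref{eq:block-uptri-co-B}, with $\beta(i) < \alpha(i)$ for some $i$. Set $G = \operatorname{diag}(g_i) \in \GL(n; \R)$. Then $G\mathcal{A}G^{-1}$ has blocks $g_i A(i) g_i^{-1}$ on the diagonal and sums of $g_{ha} V(a) g_{ta}^{-1}$ off the diagonal. Each such block maps the first $\beta(\cdot)$ coordinates of its source into the first $\beta(\cdot)$ coordinates of its target. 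Sums of such blocks keep this property, and parallel arcs are handled this way.

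Third, we permute the coordinates of $\R^n$. Collect the "top" coordinates (the first $\beta(i)$ of each subsystem) into a first group of size $n_1 = \sum_i \beta(i)$, and the remaining coordinates into a second group of size $n_2 = \sum_i (\alpha(i) - \beta(i))$. Let $P$ be this permutation, and put $g' = PG$. Then $g'\mathcal{A}g'^{-1}$ has a zero lower-left block, and by \eqref{eq:block-uptri-co-B} the lower part of $g'\mathcal{B}$ is zero. Since $n_2 > 0$, this is exactly the form \eqref{eq:co-block-decomp}, so $(\mathcal{A}, \mathcal{B})$ is uncontrollable.

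Alternatively, the same conclusion follows from invariant subspaces. The subspace $\bigoplus_i \Wc(i)$ is $\mathcal{A}$-invariant and contains $\im \mathcal{B}$, so it contains the classical controllable subspace. Because it is proper, the system is uncontrollable. The only point needing care is the bookkeeping for off-diagonal blocks, in particular parallel arcs and the permutation; there is no real obstacle.
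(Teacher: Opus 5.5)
Your proof is correct and is essentially the paper's. Your alternative argument is exactly the paper's proof via Lemma~\ref{lem:nc-Wc}: $\bigoplus_{i \in N_0} \Wc(i)$ is a proper invariant subspace of the aggregated state matrix that contains the image of the aggregated input matrix, so it contains the classical controllable subspace. Your main route through Theorem~\ref{thm:block-uptri-co} and \eqref{eq:co-block-decomp} is the same fact written in coordinates, and summing $V(a)$ over parallel arcs is a slightly more careful description of the aggregated matrix than the paper gives.
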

\begin{proof}
    Let $n = \sum_{i \in N_0} \alpha(i)$ and $m = \sum_{i \in \Nin_0} m(i)$ and let $\tilde A, \tilde B$ be the aggregated system matrices of the networked LTI system as a single LTI system.
    Namely, $\tilde A$ is an $n \times n$ block matrix whose diagonal blocks are $A(i)$ and whose off-diagonal blocks are either $V(a)$ or the zero matrix.
    Similarly, $\tilde B$ is an $n \times m$ block matrix whose blocks are either $B(i)$ or the zero matrix.
    Take the controllable subrepresentation $W_c$ and let $U = \bigoplus_{i \in N_0} W_c(i)$ be the corresponding subspace of $\R^n$. 
    Then, $U$ is a $\tilde A$-invariant subspace containing $\im \tilde B$.
    By Lemma~\ref{lem:nc-Wc}, if the networked LTI system is not network-respecting controllable, then $\dim U = \sum_{i \in N_0} \dim \Wc(i) < \sum_{i \in N_0} \alpha(i) = n$.
    Since the controllable subspace is the minimum $\tilde A$-invariant subspace containing $\im \tilde B$, it is contained in $U$ and therefore has dimension strictly less than $n$.
\end{proof}

\begin{remark}
    Here, we compare our notion of network-respecting controllability with other related notions.
    First, there exists a networked LTI system that is network-respecting controllable but not controllable as an LTI system. 
    Consider the networked LTI system in Figure~\ref{fig:networked-LTI-nc} with $\alpha(i) = 1$ for all subsystems and $m(1) = 1$.
    Suppose that the local dynamics matrix satisfies $A(i) = 0$ for each subsystem $i$.
    The interconnection dynamics are given by $a_1, a_2 \in \R$ and the input matrix at subsystem $1$ is simply a scalar $b \in \R$.
    We assume that $a_1, a_2, b \neq 0$.
    Then, the controllable subrepresentation $\Wc$ is given by $\Wc(1) = \R$, $\Wc(2) = \R$, and $\Wc(3) = \R$.
    Therefore, $\sigma(\dimv \Wc) = -3 = \sigma(\dimv V)$, and the system is network-respecting controllable.
    On the other hand, the aggregated system matrices are given by
    \begin{alignat*}{2}
        \tilde A &= 
        \begin{bmatrix}
            0 & 0 & 0 \\
            a_1 & 0 & 0 \\
            a_2 & 0 & 0
        \end{bmatrix},
        & \quad
        \tilde B &= \begin{bmatrix}
            b \\
            0 \\
            0
        \end{bmatrix}.
    \end{alignat*}
    Then, the controllability matrix \eqref{eq:c-mat} is given by
    \begin{align*}
        \begin{bmatrix}
            b & 0     & 0 \\
            0 & b a_1 & 0 \\
            0 & b a_2 & 0 
        \end{bmatrix},
    \end{align*}
    which is not full row rank.
    Therefore, the system is not controllable as an LTI system.

    Second, if a networked LTI system is network-respecting controllable, then every subsystem is reachable\footnote{Here, the reachability is graph-theoretic and is different from the reachability of states in control theory.} by a directed path from some subsystem with a direct input.
    To see this, let $X \subseteq N_0$ be the set of subsystems reachable from some subsystem with a direct input.
    Then, subspaces $W(i) = V(i)$ for $i \in X$ and $W(i) = \{\zeros\}$ for $i \in N_0 \setminus X$ form a subrepresentation of $V$.
    If there exists an unreachable subsystem $i \in N_0 \setminus X$, then one can take $\beta(i) = 0$ and therefore the networked LTI system is not network-respecting controllable.
\end{remark}

\begin{figure}
    \centering
    \includegraphics[width=3cm]{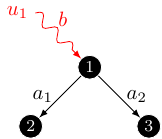}
    \caption{A networked LTI system that is network-respecting controllable but not controllable as an LTI system.}\label{fig:networked-LTI-nc}
\end{figure}

We now define the notion of observability for networked LTI systems in a dual fashion.

\begin{definition}[Network-respecting observability of networked LTI systems]
We say that a networked LTI system \eqref{eq:networked-LTI} is \emph{network-respecting observable} if the corresponding representation $V$ of the marked quiver $Q$ is $\sigma$-semistable, where $\sigma$ is a weight defined by $\sigma(i) = 1$ ($i \in N_0$) and $\sigma(i) = 0$ ($i \in Q_0 \setminus N_0$).
\end{definition}

As with network-respecting controllability, the generalized King's criterion (Lemma~\ref{lem:king-marked}) implies that a networked LTI system is network-respecting observable if and only if $\sigma(\dimv W) \leq 0$ for every subrepresentation $W \leq V$ with $W(i) = \{\zeros\}$ for each unmarked $i \in Q_0 \setminus N_0$.
Note that the second condition in the generalized King's criterion is trivially satisfied since $\sigma$ is nonnegative.
Let $\caL_o$ be the lattice of all subrepresentations $W \leq V$ with $W(i) = \{\zeros\}$ ($i \in Q_0 \setminus N_0$).
We call the unique \emph{maximum} subrepresentation in $\caL_o$ the \emph{unobservable subrepresentation} and denote it by $\Wno$.
Then, the network-respecting observability is equivalent to $\sigma(\dimv \Wno) \leq 0$, which is further equivalent to $\Wno(i) = \{\zeros\}$ for every $i \in N_0$.
Again, in the case of the quiver in Figure~\ref{fig:LTI-quiver}, the subspace $\Wno(1)$ is exactly the unobservable subspace of the LTI system.
Thus, we have the following network-respecting observability version of Lemma~\ref{lem:nc-Wc}.

\begin{lemma}\label{lem:no-Wno}
    A networked LTI system \eqref{eq:networked-LTI} is network-respecting observable if and only if the unobservable subrepresentation $\Wno$ satisfies $\Wno(i) = \{\zeros\}$ for every subsystem $i \in N_0$.
\end{lemma}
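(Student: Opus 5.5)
We prove Lemma~\ref{lem:no-Wno} by following the proof of Lemma~\ref{lem:nc-Wc} with the roles of the two conditions in the generalized King's criterion exchanged.

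\emph{Step 1: reduce to condition 1.} Take $\sigma(i)=1$ for $i\in N_0$ and $\sigma(i)=0$ otherwise. Every $W\le V$ satisfies $\dim W(i)\le\dim V(i)$, so $\sigma(\dimv W)\le\sigma(\dimv V)$ holds automatically. Hence condition~2 of Lemma~\ref{lem:king-marked} is always satisfied. Network-respecting observability is therefore equivalent to condition~1: $\sigma(\dimv W)\le 0$ for every $W\in\caL_o$.

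\emph{Step 2: $\Wno$ exists.} $\caL_o$ is nonempty, since the zero subrepresentation lies in it. It is closed under sums: if $W,W'\in\caL_o$, then $(W+W')(o_i)=\{\zeros\}$ for every unmarked vertex $o_i$, and invariance under each arc map is preserved by sums. Because every chain of subspaces in a finite-dimensional space stabilizes, the sum of all members of $\caL_o$ is again in $\caL_o$. This sum is the maximum element $\Wno$.

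\emph{Step 3: the equivalence.} Since $\sigma\ge 0$, for any $W\in\caL_o$ we have $W\le\Wno$ and so $\sigma(\dimv W)\le\sigma(\dimv\Wno)$. Thus condition~1 holds if and only if $\sigma(\dimv\Wno)\le 0$. But $\sigma(\dimv\Wno)=\sum_{i\in N_0}\dim\Wno(i)$ is a sum of nonnegative integers, so it is at most $0$ exactly when $\Wno(i)=\{\zeros\}$ for all $i\in N_0$.

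Alternatively, one could apply Lemma~\ref{lem:nc-Wc} to the transposed representation $V^\top$, via the $\sigma\leftrightarrow-\sigma$ duality. That route requires checking that orthogonal complements exchange $\caL_o$ and the controllable lattice of the transposed system, and that the duality holds for marked quivers. The direct argument above avoids these checks, so it is the one we would present. There is no real obstacle: the only point needing care is that condition~2 is vacuous and that maximality of $\Wno$ reduces the condition to a single subrepresentation.
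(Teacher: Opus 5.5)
Your proof is correct and is essentially the paper's argument: condition~2 of the generalized King's criterion (Lemma~\ref{lem:king-marked}) holds automatically because $\sigma\ge 0$, and since every $W\in\caL_o$ lies below the maximum element $\Wno$, condition~1 reduces to $\sigma(\dimv\Wno)=\sum_{i\in N_0}\dim\Wno(i)\le 0$. Your Step~2 spells out the existence of the maximum, which the paper takes from the lattice structure of subrepresentations; in the closure-under-sums check, note that $W+W'$ also vanishes at the input vertices $s_i$, which is equally immediate.
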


The next theorem follows from an argument similar to the proof of Theorem~\ref{thm:block-uptri-co}.

\begin{theorem}\label{thm:block-uptri-obs}
    A networked LTI system \eqref{eq:networked-LTI} is not network-respecting observable if and only if there exist $g \in \GL(N, \alpha; \R)$ and a dimension vector $\gamma \leq \alpha$ such that 
    \begin{alignat}{2}
        g_i A(i) g_i^{-1} &=  
        \begin{bNiceMatrix}[first-row,first-col] 
                     & \gamma(i) &   \\
            \gamma(i) & *   & * \\ 
                     & O   & *
        \end{bNiceMatrix} &\quad& (i \in N_0),
        \label{eq:block-uptri-obs-A}
        \\
        g_{ha} V(a) g_{ta}^{-1} &=  
        \begin{bNiceMatrix}[first-row,first-col] 
                      & \gamma(ta) &   \\
            \gamma(ha) & *   & * \\ 
                      & O   & *
        \end{bNiceMatrix} &\quad& (a \in N_1),
        \label{eq:block-uptri-obs-V}
        \\
        C(i) g_i^{-1} &= 
        \begin{bNiceMatrix}[first-row,first-col] 
                 & \gamma(i) &   \\
            p(i) & O & * \\
        \end{bNiceMatrix} &\quad& (i \in \Nout_0),
        \label{eq:block-uptri-obs-C}
    \end{alignat}
    and $\gamma(i) > 0$ for some subsystem $i \in N_0$ (with or without a direct output).
\end{theorem}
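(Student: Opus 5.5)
The plan mirrors the proof of Theorem~\ref{thm:block-uptri-co}, with Lemma~\ref{lem:no-Wno} and the unobservable subrepresentation $\Wno$ taking the roles of Lemma~\ref{lem:nc-Wc} and $\Wc$. Throughout, I use the translation between upper block-triangular forms and subrepresentations. If $g_i$ is chosen so that a subspace $W(i)\le V(i)$ becomes the span of the first $\gamma(i)$ coordinates, then each of the following conditions is equivalent to the vanishing of a lower-left (or left) block:
\begin{itemize}
\item $A(i)W(i)\le W(i)$ is equivalent to the zero lower-left block in \eqref{eq:block-uptri-obs-A};
\item $V(a)W(ta)\le W(ha)$ is equivalent to the zero lower-left block in \eqref{eq:block-uptri-obs-V};
\item $C(i)W(i)=\{\zeros\}$ is equivalent to the zero left block in \eqref{eq:block-uptri-obs-C}.
\end{itemize}
The last condition is exactly the requirement $W(o_i)=\{\zeros\}$ combined with invariance along the arc $i\to o_i$. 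At the unmarked input vertices $s_i$, setting $W(s_i)=\{\zeros\}$ makes the arc condition $B(i)W(s_i)\le W(i)$ automatic. This is why no condition on $B(i)$ appears in the statement.

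For the ``only if'' direction, suppose the system is not network-respecting observable. By Lemma~\ref{lem:no-Wno}, $\Wno(i)\neq\{\zeros\}$ for some $i\in N_0$. Set $\gamma=\dimv \Wno$ restricted to $N_0$. For each $i$, choose $g_i\in\GL(\alpha(i);\R)$ mapping $\Wno(i)$ onto the first $\gamma(i)$ coordinates, for instance by extending a basis of $\Wno(i)$ to a basis of $V(i)$ and taking $g_i$ to be the inverse of the resulting basis matrix. Since $\Wno\in\caL_o$, the translation above gives \eqref{eq:block-uptri-obs-A}--\eqref{eq:block-uptri-obs-C}, and $\gamma(i)>0$ for the chosen $i$.

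For the ``if'' direction, suppose such $g$ and $\gamma$ exist. Define $W(i)=g_i^{-1}(\text{span of the first }\gamma(i)\text{ coordinates})$ for $i\in N_0$, and set $W(s_i)=W(o_i)=\{\zeros\}$. The zero blocks show that $W$ is a subrepresentation lying in $\caL_o$. Then
\[
\sigma(\dimv W)=\sum_{i\in N_0}\gamma(i)>0,
\]
which violates the first condition of the generalized King's criterion (Lemma~\ref{lem:king-marked}). Hence $V$ is not $\sigma$-semistable, so the system is not network-respecting observable.

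No step is a real obstacle. The only point needing care is the direction of the conjugation: the zero block of $C(i)g_i^{-1}$ sits in the first $\gamma(i)$ columns, which is consistent with $g_i$ sending $W(i)$ to the leading coordinates. Alternatively, one could derive the theorem from Theorem~\ref{thm:block-uptri-co} by transposition duality, but that requires reversing the block order and handling inputs versus outputs, so the direct argument is simpler.
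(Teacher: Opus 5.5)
Your proposal is correct and matches the paper's intended argument, which it states simply as following from the proof of Theorem~\ref{thm:block-uptri-co}. For ``only if'' you move $\Wno$ to leading coordinates via Lemma~\ref{lem:no-Wno}; for ``if'' you read off a subrepresentation in $\caL_o$ with $\sigma(\dimv W)=\sum_{i\in N_0}\gamma(i)>0$, which violates the generalized King's criterion, and your handling of $W(s_i)=W(o_i)=\{\zeros\}$ and the zero left block of $C(i)g_i^{-1}$ is the only adaptation needed.
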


\begin{corollary}
    If a networked LTI system \eqref{eq:networked-LTI} is not network-respecting observable, then it is not observable as an LTI system.
\end{corollary}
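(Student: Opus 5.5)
We mirror the proof of the controllability corollary, working with the unobservable subrepresentation instead of the controllable one. Let $n = \sum_{i \in N_0} \alpha(i)$ and $p = \sum_{i \in \Nout_0} p(i)$. Let $\tilde A$ be the $n \times n$ aggregated state matrix, with diagonal blocks $A(i)$ and off-diagonal blocks $V(a)$ or $O$. Let $\tilde C$ be the $p \times n$ aggregated output matrix, with blocks $C(i)$ or $O$. Set $U = \bigoplus_{i \in N_0} \Wno(i) \le \R^n$. The claim will follow from two facts: $U$ is nonzero, and $U$ lies inside the unobservable subspace of $(\tilde A, \tilde C)$.

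First, nonzero. By Lemma~\ref{lem:no-Wno}, if the system is not network-respecting observable, then $\Wno(i) \neq \{\zeros\}$ for some $i \in N_0$. Hence $\dim U > 0$.

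Second, $U$ is $\tilde A$-invariant and contained in $\ker \tilde C$. Take $\bx = (\bx_i)_{i \in N_0} \in U$. The $i$-th block of $\tilde A \bx$ is
\[
A(i)\bx_i + \sum_{a \in N_1:\, ha = i} V(a)\bx_{ta}.
\]
Each term lies in $\Wno(i)$, because $\Wno$ is a subrepresentation: the self-loop at $i$ gives $A(i)\Wno(i) \le \Wno(i)$, and each arc $a$ gives $V(a)\Wno(ta) \le \Wno(ha)$. So $\tilde A U \le U$. For each $i \in \Nout_0$, the arc $i \to o_i$ and the condition $\Wno(o_i) = \{\zeros\}$ give $C(i)\Wno(i) = \{\zeros\}$. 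Hence $\tilde C U = \{\zeros\}$.

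Conclusion. For every $\bx \in U$ and every $k \ge 0$ we have $\tilde C \tilde A^k \bx = \zeros$. So $U$ lies in the kernel of the observability matrix~\eqref{eq:o-mat}, equivalently in the maximum $\tilde A$-invariant subspace contained in $\ker \tilde C$. This kernel therefore has positive dimension, the observability matrix is not of full column rank, and the system is not observable as an LTI system. As an alternative route, one could apply the controllability corollary to the transposed system and use the duality of controllability and observability, but the direct argument above is just as short.

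No step is a real obstacle. The only point needing care is the invariance check: confirm that the blocks of $\tilde A$ match the arcs $a$ with $ha = i$, so that the arc conditions of the subrepresentation are used in the right direction.
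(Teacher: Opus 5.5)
Your proof is correct. It is exactly the dual of the paper's argument for the controllability corollary: $\bigoplus_{i \in N_0} \Wno(i)$ is a nonzero $\tilde A$-invariant subspace contained in $\ker \tilde C$, so it lies in the unobservable subspace. The paper states this observability corollary without a separate proof, implicitly leaving it to that mirrored argument.
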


\begin{remark}
    Similar to Remark~\ref{rem:nc-del}, the network-respecting observability of a networked LTI system~\eqref{eq:networked-LTI} depends neither on the set $\Nin_0$ nor on the input matrices $B(i)$ ($i \in \Nin_0$).
\end{remark}

\subsection{Kalman-type decomposition for networked LTI systems}\label{subsec:Kalman-quiver}
We now introduce our Kalman-type decomposition for networked LTI systems based on network-respecting controllability and observability.

\begin{theorem}\label{thm:Kalman-decomp-NTI}
    Let $\Wc$ and $\Wno$ be the controllable and unobservable subrepresentations, respectively.
    Then, there exists $g \in \GL(N, \alpha; \R)$ such that the following simultaneous block decomposition holds:    
\begin{align*}
    g_iA(i)g_i^{-1} &= \begin{bNiceMatrix}[first-row, first-col] 
            & \Wcno(i) & \Wco(i) & \Wncno(i) & \Wnco(i) \\
    \Wcno  (i) & * & * & * & * \\
    \Wco   (i) & O & * & O & * \\
    \Wncno (i) & O & O & * & * \\
    \Wnco  (i) & O & O & O & *
    \end{bNiceMatrix}
     &\quad& (i \in N_0), \\
    g_{ha}V(a)g_{ta}^{-1} &= \begin{bNiceMatrix}[first-row, first-col]
               & \Wcno(ta) & \Wco(ta) & \Wncno(ta) & \Wnco(ta) \\
    \Wcno  (ha) & * & * & * & * \\
    \Wco   (ha) & O & * & O & * \\
    \Wncno (ha) & O & O & * & * \\
    \Wnco  (ha) & O & O & O & *
    \end{bNiceMatrix}
    &\quad& (a \in N_1), \\
    g_iB(i) &= \begin{bNiceMatrix}[first-col] 
        \Wcno  (i) & * \\ 
        \Wco   (i) & * \\ 
        \Wncno (i) & O \\ 
        \Wnco  (i) & O
    \end{bNiceMatrix}
    &\quad& (i \in \Nin_0), \\
    C(i) g_i^{-1} &= \begin{bNiceMatrix}[first-row] 
            \Wcno(i) & \Wco(i) & \Wncno(i) & \Wnco(i) \\
            O & * & O & *
    \end{bNiceMatrix} 
    &\quad& (i \in \Nout_0),
\end{align*}
where we define 
$\Wcno(i) = \Wc(i) \cap \Wno(i)$, $\Wco(i) = \Wc(i) / (\Wc(i) \cap \Wno(i))$, $\Wncno(i) = \Wno(i) / (\Wc(i) \cap \Wno(i))$, and $\Wnco(i) = V(i) / (\Wc(i) + \Wno(i))$ for each $i \in N_0$.
\end{theorem}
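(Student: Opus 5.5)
The plan is to mimic the classical proof of the Kalman decomposition, carried out vertex by vertex. At each vertex $i \in N_0$ I will choose a basis adapted to the four subspaces, and then check the zero blocks using only the invariance properties of the subrepresentations $\Wc$ and $\Wno$, together with their intersection and sum.

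Since the subrepresentations of $V$ form a lattice, $\Wc \cap \Wno$ and $\Wc + \Wno$ are again subrepresentations of $V$ (restricted to $N_0$, they are invariant under every $A(i)$ and every $V(a)$). For each $i \in N_0$ I will pick complements inside $V(i)$. Choose $X_2(i)$ with $\Wc(i) = (\Wc(i)\cap\Wno(i)) \oplus X_2(i)$, and $X_3(i)$ with $\Wno(i) = (\Wc(i)\cap\Wno(i)) \oplus X_3(i)$. Then $\Wc(i)+\Wno(i) = (\Wc(i)\cap\Wno(i)) \oplus X_2(i)\oplus X_3(i)$, and I take $X_4(i)$ with $V(i) = (\Wc(i)+\Wno(i))\oplus X_4(i)$. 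Let $g_i^{-1}$ be the matrix whose columns list bases of $\Wc(i)\cap\Wno(i)$, $X_2(i)$, $X_3(i)$, $X_4(i)$ in this order. The block sizes are then the dimensions of $\Wcno(i),\Wco(i),\Wncno(i),\Wnco(i)$, and in these coordinates
\[
\Wc(i) = \text{blocks }1,2,\qquad \Wno(i) = \text{blocks }1,3,\qquad \Wc(i)\cap\Wno(i) = \text{block }1,\qquad \Wc(i)+\Wno(i) = \text{blocks }1,2,3.
\]

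To verify the zero pattern, I read the columns of $g_{ha}V(a)g_{ta}^{-1}$; the case of $A(i)$ is the same with $ha = ta = i$.
\begin{itemize}
\item Block column 1 lies in $\Wc\cap\Wno$, which is invariant, so its image lies in block row 1. Rows 2–4 are zero.
\item Block column 2 lies in $\Wc$, whose image lies in blocks 1,2. Rows 3,4 are zero.
\item Block column 3 lies in $\Wno$, whose image lies in blocks 1,3. Rows 2,4 are zero.
\item Block column 4 is unconstrained.
\end{itemize}
This gives exactly the displayed pattern.

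For $B(i)$, membership of $\Wc$ in $\caL_c$ means $W(s_i) = V(s_i)$ and $B(i)V(s_i) \le \Wc(i)$. Hence $\im B(i)$ lies in blocks 1,2, and block rows 3,4 of $g_iB(i)$ vanish. For $C(i)$, membership of $\Wno$ in $\caL_o$ means $W(o_i) = \{\zeros\}$, so $C(i)\Wno(i) = \{\zeros\}$. Hence block columns 1 and 3 of $C(i)g_i^{-1}$ vanish.

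The only point needing care is the identification of block sizes with the quotients $\Wco(i)$ and so on, which is immediate from the direct sum decompositions by the dimension formula. I do not expect a genuine obstacle: the key input is just that $\Wc\cap\Wno$ and $\Wc+\Wno$ are subrepresentations, which is the lattice fact recorded in Section~\ref{sec:pre}.
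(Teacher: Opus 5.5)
Your proposal is correct and follows the same route as the paper: the zero pattern for $A(i)$ and $V(a)$ comes from the invariance of $\Wc$ and $\Wno$, and hence of $\Wc\cap\Wno$, under the self-loops and interconnection arcs, while the blocks of $B(i)$ and $C(i)$ come from $\Wc(s_i)=V(s_i)$ and $\Wno(o_i)=\{\zeros\}$. You spell out the adapted basis and the column-by-column check that the paper's short proof leaves implicit; invariance of $\Wc+\Wno$ is harmless but not needed, since block column 4 is unconstrained.
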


\begin{proof}
    Since $\Wc$, $\Wno$, and $\Wcno = \Wc \cap \Wno$ are subrepresentations, we have the block structure of $A(i)$ and $V(a)$.
    Since $\Wc(s_i) = \R^{m(i)}$ for each $i \in \Nin_0$, we have $\im B(i) = B(i) \Wc(s_i) \leq \Wc(i)$, which implies the desired block structure of $B(i)$.
    Similarly, for each $i \in \Nout_0$, we have $\Wno(i) \leq \ker C(i)$ by $\Wno(o_i) = \{\zeros\}$.
    Thus, we have the desired block structure of $C(i)$.
\end{proof}

Note that the above Kalman-type decomposition corresponds to the following pair of chains of subrepresentations:
\begin{center}
\begin{tikzpicture}
    \node (Zero) at (0, -2) {$\{\zeros\}$};
    \node (Wcno) at (0, -1) {$\Wc \cap \Wno$};
    \node (Wc) at (-1, 0) {$\Wc$};
    \node (Wno) at (+1, 0) {$\Wno$};
    \node (Wc+Wno) at (0, 1) {$\Wc + \Wno$};
    \node (V) at (0, 2) {$V$};
    \graph{(Zero) -> (Wcno) -> {(Wc), (Wno)} -> (Wc+Wno) -> (V);};
\end{tikzpicture}
\end{center}
Here, $\{\zeros\}$ denotes the zero subrepresentation of $V$.
Also, the above Kalman-type decomposition recovers the standard Kalman decomposition when $Q$ is the quiver in Figure~\ref{fig:LTI-quiver}.

\subsection{Algorithms for controllability, observability, and Kalman-type decomposition for networked LTI systems}\label{subsec:networked-LTI-alg}
In this section, we describe algorithms for network-respecting controllability, observability, and the Kalman-type decomposition for networked LTI systems.

We show our algorithm for network-respecting controllability in Algorithm~\ref{alg:nc}.
Our algorithm is a simple subspace iteration.
We maintain a subspace $W^{(k)}(i)$ for each $i \in N_0$ and each $k = 0, 1, 2, \dots$.
Initially, we set 
\begin{align}\label{eq:W-iter-init}
W^{(0)}(i) := 
\begin{cases}
\im B(i) & (i \in \Nin_0), \\
\{\zeros\} & (i \in N_0 \setminus \Nin_0).
\end{cases}
\end{align}
Then, we iteratively define a new subspace $W^{(k+1)}(i)$ by 
\begin{align}\label{eq:W-iter}
    W^{(k+1)}(i) := W^{(k)}(i) + A(i) W^{(k)}(i) + \sum_{a \in N_1: ha = i} V(a) W^{(k)}(ta)
\end{align}
for each $i \in N_0$.
This subspace iteration generates a flag $W^{(0)}(i) \leq W^{(1)}(i) \leq W^{(2)}(i) \leq \cdots$ in $V(i)$ for each $i \in N_0$.
If $W^{(k+1)}(i) = W^{(k)}(i)$ for all $i \in N_0$, then
\[
    W^{(k)}(i) = W^{(k)}(i) + A(i) W^{(k)}(i) + \sum_{a \in N_1: ha = i} V(a) W^{(k)}(ta),
\]
i.e., $W^{(k)}(i)$ is $A(i)$-invariant for each $i \in N_0$ and $V(a)W^{(k)}(ta) \leq W^{(k)}(ha)$ for each $a \in N_1$.
By setting $W^{(k)}(i) := V(i)$ for each unmarked vertex $i \in Q_0 \setminus N_0$, we can extend these subspaces to a subrepresentation $W^{(k)}$ of $V$.
This is the output of the algorithm.
We will show that this algorithm outputs the controllable subrepresentation $\Wc$.

\begin{algorithm}
    \caption{Algorithm for computing the controllable subrepresentation $\Wc$}\label{alg:nc}
\begin{algorithmic}[1]
    \STATE Initialize $W^{(0)}(i)$ by \eqref{eq:W-iter-init} for each $i \in N_0$.
    \FOR{$k = 0, 1, 2, \dots$}
        \STATE Define $W^{(k+1)}(i)$ for each $i \in N_0$ by \eqref{eq:W-iter}.
        \IF{$W^{(k+1)}(i) = W^{(k)}(i)$ for all $i \in N_0$}
            \RETURN $(W^{(k)}(i) \mid i \in N_0) \cup (V(i) \mid i \in Q_0 \setminus N_0)$.
        \ENDIF
    \ENDFOR
\end{algorithmic}
\end{algorithm}

\begin{remark}
    As seen in Remark~\ref{rem:nc-del}, the network-respecting controllability of a networked LTI system depends neither on the set $\Nout_0$ nor on the output matrices $C(i)$ ($i \in \Nout_0$).
    Correspondingly, Algorithm~\ref{alg:nc} does not use the output matrices $C(i)$ ($i \in \Nout_0$).
    Note that the algorithm uses $N$ rather than the extended quiver $Q$.
\end{remark}

We denote $\sum_{i \in N_0}\alpha(i)$ by $|\alpha|$.
We show the correctness and time complexity of Algorithm~\ref{alg:nc} in the following lemma.

\begin{lemma}\label{lem:alg}
   Algorithm~\ref{alg:nc} outputs the controllable subrepresentation $\Wc$ in time 
   \begin{align}\label{eq:Tin}
   O\left(\sum_{i \in \Nin_0}m(i)\alpha(i)\min\{m(i), \alpha(i)\} + |\alpha| \bigg(\sum_{i \in N_0} \alpha(i)^3 + \sum_{a \in N_1} \alpha(ha)\alpha(ta)(\alpha(ha) + \alpha(ta))\bigg)\right).
   \end{align}
   Furthermore, if all system matrices are rational, then the bit complexity of Algorithm~\ref{alg:nc} is polynomial in the input size.
\end{lemma}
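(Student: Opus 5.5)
Correctness and complexity can be handled separately. For correctness, I will show by induction on $k$ that $W^{(k)}(i) \le \Wc(i)$ for every $i \in N_0$ and every $k$. The base case holds because $\Wc(s_i) = \R^{m(i)}$, so $\im B(i) = B(i)\Wc(s_i) \le \Wc(i)$. For the inductive step, $\Wc$ is a subrepresentation, so $A(i)\Wc(i) \le \Wc(i)$ and $V(a)\Wc(ta) \le \Wc(ha)$. Hence \eqref{eq:W-iter} preserves the inclusion.

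Next, the iteration terminates. Each flag $W^{(0)}(i) \le W^{(1)}(i) \le \cdots$ is increasing. If $W^{(k+1)} \ne W^{(k)}$, then $\sum_i \dim W^{(k)}(i)$ strictly increases. This sum is at most $|\alpha|$, so there are at most $|\alpha|+1$ iterations.

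When the iteration stops, the discussion before the algorithm shows that the output $W^{(k)}$ is a subrepresentation. It contains $\im B(i)$ because $W^{(0)}(i) \le W^{(k)}(i)$, so $B(i)W^{(k)}(s_i) \le W^{(k)}(i)$. The arcs to the vertices $o_i$ are trivially fine, since $W^{(k)}(o_i) = V(o_i)$. Thus $W^{(k)} \in \caL_c$. Since $\Wc$ is the minimum element of $\caL_c$, we get $\Wc \le W^{(k)}$, and together with the induction this gives $W^{(k)} = \Wc$.

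For the running time, I store each $W^{(k)}(i)$ by a basis of column vectors in echelon (or orthonormal) form.
\begin{itemize}
\item Initialization: computing a basis of $\im B(i)$ by Gaussian elimination on an $\alpha(i)\times m(i)$ matrix costs $O(m(i)\alpha(i)\min\{m(i),\alpha(i)\})$. This is the first term of \eqref{eq:Tin}.
\item Applying $A(i)$: multiplying $A(i)$ by a basis of at most $\alpha(i)$ vectors costs $O(\alpha(i)^3$).
\item Applying $V(a)$: multiplying $V(a)$ by a basis of $W^{(k)}(ta)$ costs $O(\alpha(ha)\alpha(ta)^2)$. The resulting $\alpha(ta)$ vectors of length $\alpha(ha)$ must then be merged into $W(ha)$.
\item Merging: row-reducing the concatenation of the old basis (at most $\alpha(ha)$ vectors) with the $\alpha(ta)$ new vectors costs $O(\alpha(ha)^2(\alpha(ha)+\alpha(ta)))$. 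To keep the per-arc charge at $O(\alpha(ha)\alpha(ta)(\alpha(ha)+\alpha(ta)))$, I incrementally reduce each new vector against the current echelon basis at cost $O(\alpha(ha)^2)$ per vector. This gives $O(\alpha(ta)\alpha(ha)^2)$ per arc.
\end{itemize}
Summing gives the bracket in \eqref{eq:Tin} per iteration, and multiplying by the $O(|\alpha|)$ iterations gives the stated bound. I expect the main obstacle to be this per-arc bookkeeping, so that no extra term such as $\alpha(ha)^3$ per arc appears. The equality test $W^{(k+1)} = W^{(k)}$ needs no extra work, since it is just a comparison of dimensions.

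For the bit complexity, I will use fraction-free Gaussian elimination (Bareiss / Edmonds) so that every entry produced is a ratio of minors of the input data. Any basis of $W^{(k)}(i)$ can be chosen among vectors of the form $A(i)^{e}\cdots V(a)\cdots B(j)e_\ell$, that is, products of at most $|\alpha|$ system matrices applied to input columns. These form columns of a block Krylov-type matrix of polynomial size. To avoid compounding numeric growth, I will recompute at each step a basis consisting of such chosen columns, recording only which columns are selected, rather than reusing previously reduced vectors. The resulting entries are products of at most $|\alpha|+1$ rational matrices, so their bit length is polynomial. Rank tests on them done via Bareiss also stay polynomial, and this yields polynomial bit complexity.
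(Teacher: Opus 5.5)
Your proposal is correct and follows essentially the paper's proof. It uses the same induction $W^{(k)}(i)\le \Wc(i)$ combined with minimality of $\Wc$ in $\caL_c$, and the same $O(|\alpha|)$ bound on iterations. For bit complexity it uses the same device: keep genuine columns, i.e.\ products of at most $|\alpha|$ system matrices applied to input columns, and use elimination only to select them.

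The differences are cosmetic. The paper runs one elimination per vertex on the concatenation \eqref{eq:GE-mat}, at cost $O(\alpha(i)^2(\alpha(i)+\sum_{a:ha=i}\alpha(ta)))$. That already avoids the per-arc $\alpha(ha)^3$ term you worried about, and a single column-selection implementation gives both the time and the bit bounds. Your phrase ``block Krylov-type matrix of polynomial size'' is inaccurate, since there are exponentially many walks. It is harmless, because each step examines only polynomially many candidate columns.
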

\begin{proof}
    First, we claim that $W^{(k)}(i) \leq \Wc(i)$ for each $i \in N_0$ and $k = 0, 1, 2, \dots$.
    We show this claim by induction on $k$.
    For $k=0$, we have $W^{(0)}(i) = \im B(i) \leq \Wc(i)$ for each $i \in \Nin_0$ and $W^{(0)}(i) = \{\zeros\} \leq \Wc(i)$ for each $i \in N_0 \setminus \Nin_0$.
    Thus the claim holds for $k=0$.
    Next, suppose that $W^{(k)}(i) \leq \Wc(i)$ for each $i \in N_0$.
    Since $\Wc(i)$ is $A(i)$-invariant for each $i \in N_0$ and $V(a)\Wc(ta) \leq \Wc(ha)$ for each $a \in N_1$, we have $W^{(k+1)}(i) \leq \Wc(i)$ by \eqref{eq:W-iter}, which completes the proof of the induction step and hence the claim.

    Let $W$ be the subrepresentation output by the algorithm.
    By construction, $W \in \caL_c$.
    Also, $W \leq \Wc$ by the claim.
    Since $\Wc$ is the minimal subrepresentation of $\caL_c$, we must have $W = \Wc$, which shows the correctness of the algorithm.

    Let us analyze the time complexity.
    Whenever some subspace is updated, the sum of the dimensions $\sum_{i \in N_0} \dim W^{(k)}(i)$ increases by at least one.
    Obviously, $\sum_{i \in N_0} \dim W^{(k)}(i) \leq |\alpha|$, so there are at most $|\alpha|$ iterations.

    Next, consider the time complexity of each iteration.
    For each $i \in N_0$, let $P^{(k)}(i)$ be a matrix whose columns form a basis of $W^{(k)}(i)$.
    Initial matrix $P^{(0)}(i)$ can be computed in $O(m(i)\alpha(i)\min\{m(i), \alpha(i)\})$ time by Gaussian elimination for each $i \in \Nin_0$.
    Given $P^{(k)}(i)$, we can construct the next matrix $P^{(k+1)}(i)$ as follows:
    Find a maximum-size linearly independent subset of the columns of matrices
    \begin{align}\label{eq:GE-mat}
        P^{(k)}(i), A(i) P^{(k)}(i), \text{ and } V(a) P^{(k)}(ta) \; (a \in N_1 \text{ with } ha = i),
    \end{align}
    then set the columns of $P^{(k+1)}(i)$ to be this linearly independent subset.
    In \eqref{eq:GE-mat}, the second matrix can be computed in $O(\alpha(i)^3)$ time and the third matrices can be computed in $O(\sum_{a \in N_1: ha = i} \alpha(i) \alpha(ta)^2)$ time.
    Such a set of columns can be found by Gaussian elimination to these matrices in $O(\alpha(i)^2 (\alpha(i) + \sum_{a \in N_1: ha = i} \alpha(ta)))$ time.
    Therefore, the total time complexity to compute $P^{(k+1)}(i)$ for all $i \in N_0$ is 
    \[
        O\bigg(\sum_{i \in N_0} \alpha(i)^3 + \sum_{a \in N_1} (\alpha(ha)^2\alpha(ta) + \alpha(ha) \alpha(ta)^2) \bigg)
    \]
    time. 
    This proves the desired time complexity.
    Note that Gaussian elimination is used only for finding the column subset and the actual columns of $P^{(k+1)}(i)$ are taken from the matrices in \eqref{eq:GE-mat}, which is crucial for the following bit complexity analysis.

    Finally, we show that the bit complexity of this algorithm is polynomial.
    Inductively, we can show the columns of $P^{(k)}(i)$ are taken from the columns of matrix products in the form of 
    \begin{align*}
        V(a_\ell) \cdots V(a_1) P^{(0)}(ta_1),
    \end{align*}
    for a directed path\footnote{Note that a path here can have repeated vertices and arcs.} $a_1 \cdots a_\ell$ in $Q$ from $ta_1 \in \Nin_0$ to $i$ of length $\ell \leq k$.
    Since $k \leq |\alpha|$, the bit complexity of these matrix products is polynomial.
    Since the bit complexity of Gaussian elimination is polynomial in that of the input matrices~\cite[Theorem~3.3]{Schrijver1986book}, the bit complexity of the algorithm is polynomial.
\end{proof}

One can also find the unobservable subrepresentation $\Wno$ by applying Algorithm~\ref{alg:nc} to the transposed representation and take the orthogonal complements.
This can be done in time
\begin{align}\label{eq:Tout}
    O\left(\sum_{i \in \Nout_0}p(i)\alpha(i)\min\{p(i), \alpha(i)\} + |\alpha| \bigg(\sum_{i \in N_0} \alpha(i)^3 + \sum_{a \in N_1} \alpha(ha)\alpha(ta)(\alpha(ha) + \alpha(ta))\bigg)\right).
\end{align}
Once we have the subrepresentations $\Wc$ and $\Wno$, we can find the Kalman-type decomposition by directly applying Theorem~\ref{thm:Kalman-decomp-NTI}.
Summarizing this discussion, we have the following theorem.
Let $\Tc(\alpha, m)$ and $\To(\alpha, p)$ denote the time complexity in \eqref{eq:Tin} and \eqref{eq:Tout}, respectively.

\begin{theorem}\label{thm:alg}
    There exist algorithms that, given a networked LTI system, find
    \begin{itemize}
        \item the controllable subrepresentation $\Wc$ in $\Tc(\alpha, m)$ time;
        \item the unobservable subrepresentation $\Wno$ in $\To(\alpha, p)$ time; and
        \item the Kalman-type decomposition in $\Tc(\alpha, m) + \To(\alpha, p)$ time.
    \end{itemize}
    Therefore, one can decide the network-respecting controllability and observability and find the Kalman-type decomposition in time polynomial in $\alpha, m, p$ and the size of $N$.
\end{theorem}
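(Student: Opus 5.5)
The plan is to assemble Theorem~\ref{thm:alg} from results already established, treating the three items in turn and then reading off the decision procedure. For the first item I will invoke Lemma~\ref{lem:alg} directly. Algorithm~\ref{alg:nc} returns $\Wc$ in $\Tc(\alpha,m)$ time, and with rational data its bit complexity is polynomial. Network-respecting controllability is then decided by comparing $\dim \Wc(i)$ with $\alpha(i)$ for each $i\in N_0$, as justified by Lemma~\ref{lem:nc-Wc}. This comparison costs nothing extra, since the algorithm already maintains basis matrices $P^{(k)}(i)$.

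For the second item I will use duality. Form the transposed networked system on $N^\top$, with local dynamics $A(i)^\top$, interconnections $V(a)^\top$ along reversed arcs, and inputs $C(i)^\top$ at the vertices $i\in\Nout_0$, and run Algorithm~\ref{alg:nc} on it to obtain its controllable subrepresentation $W'$. The key claim is that $\Wno(i)=W'(i)^\perp$ for every $i\in N_0$; this is the step I expect to need the most care. I would prove it by exhibiting an inclusion-reversing bijection between $\caL_o$ for $V$ and $\caL_c$ for $V^\top$, given by $W\mapsto W^\perp$ componentwise, with $\{\zeros\}$ on unmarked vertices sent to the full space. Invariance transfers because $V(a)W(ta)\leq W(ha)$ holds if and only if $V(a)^\top W(ha)^\perp\leq W(ta)^\perp$; the self-loops are covered by the same computation with $ta=ha$. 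The condition $W(o_i)=\{\zeros\}$, i.e.\ $W(i)\leq\ker C(i)$, is equivalent to $\im C(i)^\top\leq W(i)^\perp$. Under this bijection the maximum element of $\caL_o$ corresponds to the minimum element of $\caL_c$.

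The cost of the second item is the run of Algorithm~\ref{alg:nc} on the transposed system, which is $\To(\alpha,p)$ (the complexity in \eqref{eq:Tin} with $m$ replaced by $p$), plus the orthogonal complements. Computing the complements is a kernel computation taking $O(\alpha(i)^3)$ per vertex, which is absorbed into that bound. Observability is then decided via Lemma~\ref{lem:no-Wno}.

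For the third item, given bases of $\Wc(i)$ and $\Wno(i)$, I will compute bases of $\Wc(i)\cap\Wno(i)$ and $\Wc(i)+\Wno(i)$ by Gaussian elimination in $O(\alpha(i)^3)$ per vertex. I then extend these successively to a basis adapted to the chain $\Wc\cap\Wno\le\Wc,\Wno\le\Wc+\Wno\le V(i)$, and take $g_i$ to be the inverse of the matrix whose columns are this basis. By Theorem~\ref{thm:Kalman-decomp-NTI}, the resulting $g$ yields the stated block forms. All of this costs $O(\sum_i\alpha(i)^3)$, which is dominated by $\Tc+\To$. Every step consists of polynomially many Gaussian eliminations on polynomially sized data, so the overall bounds are polynomial in $\alpha,m,p$ and $|N|$, which gives the final claim.
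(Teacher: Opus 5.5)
Your proposal is correct and follows the paper's own route: Lemma~\ref{lem:alg} for $\Wc$, then Algorithm~\ref{alg:nc} on the transposed representation followed by orthogonal complements for $\Wno$, then an adapted basis as in Theorem~\ref{thm:Kalman-decomp-NTI} for the decomposition. You also supply details the paper leaves implicit, namely the inclusion-reversing bijection $W \mapsto W^\perp$ between $\caL_o$ for $V$ and $\caL_c$ for $V^\top$, and the $O(\sum_i \alpha(i)^3)$ cost of the complements and the adapted basis being absorbed into $\Tc(\alpha,m)+\To(\alpha,p)$; both check out.
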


\subsection{Extension to network-respecting target controllability and observability}\label{subsec:target}
Studies of complex networks often consider \emph{target controllability} instead of standard controllability~\cite{Gao2014,Hao2022,Wang2024}.
Let $Z_c \subseteq N_0$ be a set of subsystems.
A networked LTI system is \emph{target controllable} with respect to $Z_c$ if the projected controllability matrix $\Pi_{Z_c} M_c$ is full row rank, where $M_c$ is the controllability matrix \eqref{eq:c-mat} and $\Pi_{Z_c}$ is the coordinate projection matrix onto the state space of subsystems in $Z_c$.
The standard controllability is a special case of the target controllability with $Z_c = N_0$.

Here, we show that our notion of network-respecting controllability naturally extends to the target version even when subsystems have dimensions greater than one.
For a vertex subset $X \subseteq N_0$, let $\ones_X$ be a weight that takes one on $X$ and zero elsewhere.

\begin{definition}[Network-respecting target controllability]
    We say that a networked LTI system \eqref{eq:networked-LTI} is \emph{network-respecting target controllable} with respect to $Z_c$ if $V$ is $\sigma$-semistable, where $\sigma = -\ones_{Z_c}$.
\end{definition}

When $Z_c = N_0$, this definition coincides with network-respecting controllability (Definition~\ref{def:nc}).
By the same argument as in Section~\ref{subsec:networked-LTI}, one can show the following target version of Theorem~\ref{thm:block-uptri-co}.
\begin{theorem}\label{thm:block-uptri-target-co}
    A networked LTI system \eqref{eq:networked-LTI} is not network-respecting target controllable with respect to $Z_c$ if and only if there exist $g \in \GL(N, \alpha; \R)$ and a dimension vector $\beta \leq \alpha$ such that the block decomposition \eqref{eq:block-uptri-co-A}--\eqref{eq:block-uptri-co-B} holds and $\beta(i) < \alpha(i)$ for some target subsystem $i \in Z_c$.
\end{theorem}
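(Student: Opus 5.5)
Note first that $\sigma = -\ones_{Z_c}$ is supported on $Z_c$ rather than on all of $N_0$. To apply the generalized King's criterion (Lemma~\ref{lem:king-marked}) as in Section~\ref{subsec:networked-LTI}, I keep the marked set equal to $N_0$ and extend $\sigma$ by zero on $N_0 \setminus Z_c$. Since $\sigma \le 0$, the first condition of Lemma~\ref{lem:king-marked} holds trivially. Hence $V$ is $\sigma$-semistable if and only if $\sigma(\dimv W) \le \sigma(\dimv V)$ for every $W \in \caL_c$, that is,
\[
\sum_{i\in Z_c}\dim W(i) \ge \sum_{i \in Z_c}\alpha(i) \quad \text{for all } W \in \caL_c .
\]
Because $\dim W(i) \le \alpha(i)$ for each $i$, this inequality is equivalent to $W(i) = V(i)$ for all $i \in Z_c$. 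Since $\Wc$ is the minimum of $\caL_c$ and the condition is monotone in $W$, this gives the target analogue of Lemma~\ref{lem:nc-Wc}: the system is network-respecting target controllable if and only if $\Wc(i) = V(i)$ for every $i \in Z_c$.

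For the forward direction, suppose the system is not network-respecting target controllable. Then $\Wc(i) < V(i)$ for some $i \in Z_c$. Set $\beta = \dimv \Wc$ restricted to $N_0$, and for each $i \in N_0$ choose $g_i$ so that $g_i\Wc(i)$ is the span of the first $\beta(i)$ coordinate vectors. The block forms are then forced by the subrepresentation property:
\begin{itemize}
\item $A(i)$-invariance of $\Wc(i)$ gives \eqref{eq:block-uptri-co-A};
\item $V(a)\Wc(ta) \le \Wc(ha)$ gives \eqref{eq:block-uptri-co-V};
\item $\im B(i) = B(i)\Wc(s_i) \le \Wc(i)$, since $\Wc(s_i) = V(s_i)$, gives \eqref{eq:block-uptri-co-B}.
\end{itemize}
Finally, $\beta(i) < \alpha(i)$ holds for the chosen $i \in Z_c$.

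For the converse, given $g$ and $\beta$ satisfying \eqref{eq:block-uptri-co-A}--\eqref{eq:block-uptri-co-B}, define $W(i) = g_i^{-1}\,\mathrm{span}(e_1,\dots,e_{\beta(i)})$ for $i \in N_0$ and $W(j) = V(j)$ at the unmarked vertices. The zero blocks show that $W$ is a subrepresentation in $\caL_c$:
\begin{itemize}
\item \eqref{eq:block-uptri-co-A} makes $W(i)$ invariant under the self-loop $A(i)$;
\item \eqref{eq:block-uptri-co-V} gives $V(a)W(ta) \le W(ha)$;
\item \eqref{eq:block-uptri-co-B} gives $B(i)V(s_i) \le W(i)$.
\end{itemize}
The arcs to the vertices $o_i$ are harmless because $W(o_i) = V(o_i)$. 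Since $\beta(i) < \alpha(i)$ for some $i \in Z_c$, we get $\sigma(\dimv W) = -\sum_{i \in Z_c}\beta(i) > -\sum_{i \in Z_c}\alpha(i) = \sigma(\dimv V)$. This violates the second condition of Lemma~\ref{lem:king-marked}, so the system is not target controllable in the network-respecting sense.

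The only step needing care is the first one: applying Lemma~\ref{lem:king-marked} when $\sigma$ vanishes on some marked vertices. The plan is to read $\sigma$ as a weight on the full marked set $N_0$ that happens to be zero on $N_0 \setminus Z_c$, which Lemma~\ref{lem:king-marked} allows since it only requires $\sigma$ to be supported on $M$. With that reading, the translation of the King inequality into the condition $\Wc(i) = V(i)$ for $i \in Z_c$ is immediate, and everything else is the same as in the proof of Theorem~\ref{thm:block-uptri-co}.
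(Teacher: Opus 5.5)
Your proof is correct and follows the paper's approach: the paper proves this theorem by repeating the argument of Theorem~\ref{thm:block-uptri-co} with $\sigma = -\ones_{Z_c}$ on the same marked set $N_0$. That argument uses the generalized King's criterion to reduce to $\Wc(i) = V(i)$ for all $i \in Z_c$, reads off the block form from $\Wc$ in one direction, and in the other direction builds a violating subrepresentation in $\caL_c$ from the leading coordinate subspaces. Your choice of marked set $N_0$ and your explicit $W(i) = g_i^{-1}\mathrm{span}(e_1,\dots,e_{\beta(i)})$ simply spell out details the paper leaves implicit.
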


This theorem implies that network-respecting target controllability is a necessary condition for ordinary target controllability.
One can also show that network-respecting target controllability with respect to $Z_c$ is equivalent to $\dim \Wc(i) = \alpha(i)$ for all $i \in Z_c$, where $\Wc$ is the controllable subrepresentation.
Therefore, the target controllability can also be decided in polynomial time by Algorithm~\ref{alg:nc}.

One can also define network-respecting target observability in a dual fashion.
To avoid repetition, we omit the details.

\section{On semistability in acyclic quivers with self-loops}\label{sec:reduction}
In this section, we show the algorithmic results of $\sigma$-semistability in quivers with self-loops.
Here, a quiver is called an \emph{acyclic quiver with self-loops} if each vertex has at most one self-loop and it becomes acyclic after removing all self-loops.
For networked LTI systems with acyclic interconnection quiver $N$, the corresponding (marked) quiver $Q$ is an acyclic quiver with self-loops.
In network-respecting (target) controllability and observability, the weight $\sigma$ is either all nonpositive or all nonnegative, so the simple subspace iteration (Algorithm~\ref{alg:nc}) suffices to decide $\sigma$-semistability.
In this section, we consider a generalization where $\sigma$ is either all nonpositive or all nonnegative on vertices with self-loops but may take arbitrary values on vertices without self-loops.
We show a general reduction of $\sigma$-semistability from such a case to the acyclic case.
This reduction yields a polynomial-time algorithm for deciding $\sigma$-semistability using the algorithms by \cite{Iwamasa2025}, which is based on a completely different approach from the subspace iteration in Algorithm~\ref{alg:nc}.

The following is the main result of this section.

\begin{theorem}\label{thm:reduction}
    Let $Q$ be an acyclic quiver with self-loops and $\sigma: Q_0 \to \Z$ be an integer vertex weight on $Q$.
    Assume that $\sigma(i)$ is nonzero and has the same sign for all $i \in Q_0$ with a self-loop.
    Then, the $\sigma$-semistability of a representation $V$ of $Q$ over $\F = \C$ or $\R$ can be decided in time polynomial in the size of $Q$, $V$, and the largest absolute value of $\sigma$.
    Furthermore, one can find a subrepresentation $W \leq V$ over $\F$ that maximizes $\sigma(\dimv W)$ in time polynomial in the size of $Q$, $V$, and the largest absolute value of $\sigma$. 
\end{theorem}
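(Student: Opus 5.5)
By the transposition symmetry noted in Section~\ref{sec:pre}, $V$ is $\sigma$-semistable if and only if $V^\top$ is $(-\sigma)$-semistable. Transposition preserves the class of acyclic quivers with self-loops, and it exchanges the subrepresentations of $V$ with the orthogonal complements of those of $V^\top$, with $\sigma(\dimv W)$ shifting by the constant $\sigma(\dimv V)$. So I will assume without loss of generality that $\sigma(i)<0$ for every vertex $i$ carrying a self-loop $\ell_i$; write $A_i := V(\ell_i)$. By King's criterion (Lemma~\ref{lem:king}), it suffices to check $\sigma(\dimv V)=0$, which is trivial, and to compute $\max_{W\le V}\sigma(\dimv W)$ together with a maximizer. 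The plan is to reduce this maximization to the same problem for an acyclic quiver $Q'$ with a representation $V'$ of the same dimension vector and the same weight, and then to invoke the algorithm of \cite{Iwamasa2025}.

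\emph{Construction of $Q'$.} For each self-loop vertex $i$, delete $\ell_i$. Replace every arc $a$ with $ha=i$ by the $\alpha(i)$ parallel arcs $a^{(0)},\dots,a^{(\alpha(i)-1)}$, where $V'(a^{(k)}) := A_i^k V(a)$. Leave all other arcs, including the arcs leaving $i$, unchanged. Only arcs into self-loop vertices are modified, so $Q'$ is acyclic. It has polynomially many arcs, and its matrices are products of at most $\alpha(i)$ input matrices, so their bit size stays polynomial.

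\emph{Equality of the optima.} Every subrepresentation $W$ of $V$ is also a subrepresentation of $V'$, because $A_i$-invariance of $W(i)$ gives $A_i^kV(a)W(ta)\le W(i)$. Hence $\max_Q\le\max_{Q'}$. Conversely, let $W'$ be a subrepresentation of $V'$. For each self-loop vertex $i$, set
\[
W(i):=\sum_{a:\,ha=i}\ \sum_{k=0}^{\alpha(i)-1}A_i^kV(a)W'(ta),
\]
and set $W(j):=W'(j)$ at all other vertices. By Cayley--Hamilton, $W(i)$ is the smallest $A_i$-invariant subspace containing the images of the incoming arcs, and $W(i)\le W'(i)$. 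Shrinking a space can only help the constraints on arcs leaving $i$. Arcs entering $i$ have their tails at vertices whose spaces are unchanged, or, if those tails are themselves self-loop vertices, shrunk. This last case needs care: I will process the self-loop vertices in a topological order of $Q'$ and define $W(i)$ using the already-updated $W(ta)$, so that every constraint holds by induction. Then $W$ is a subrepresentation of $V$. Since $\sigma(i)<0$ wherever we shrank, $\sigma(\dimv W)\ge\sigma(\dimv W')$. Hence the two maxima coincide, semistability of $V$ over $Q$ is equivalent to semistability of $V'$ over $Q'$, and this transformation converts a maximizer for $V'$ into one for $V$ in polynomial time.

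\emph{Main obstacle.} The hard part is the exact hypotheses of \cite{Iwamasa2025}: the field ($\R$ versus $\C$), rational inputs, and the multiplicity of parallel arcs. Over $\R$, I would argue that the optimum is field-independent: the maximum of $\sigma(\dimv W)$ does not change under extension from $\R$ to $\C$, by a Galois-descent argument in the spirit of Appendix~\ref{sec:ss-R}. Then I would check that their maximizer can be taken over the field of the input data. I also need to verify that the running time depends polynomially on the number of arcs, the dimensions, and $\max|\sigma|$, so that the $\alpha(i)$-fold duplication of arcs is harmless.
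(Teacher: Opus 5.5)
Your argument is correct and is essentially the paper's proof. It uses the same Cayley--Hamilton encoding of $A_i$-invariance by the powers $A_i^k$ ($k<\alpha(i)$), the same use of $\sigma(i)<0$ to shrink each loop vertex to its invariant closure, transposition for the positive case, and then \cite{Iwamasa2025} as a black box. The only difference is in the gadget. The paper splits each loop vertex into $i^+$ (weight $0$) and $i^-$ (weight $\sigma(i)$), joined by $\alpha(i)$ parallel arcs $V(a)^k$. You instead compose the powers into every incoming arc, which keeps $\alpha$ and $\sigma$ unchanged but costs $\alpha(i)$ arcs for each arc entering $i$ rather than $\alpha(i)$ arcs per loop vertex. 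Your topological-order precaution is harmless but unnecessary, since $W(ta)\le W'(ta)$ already suffices.
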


The key component of the proof is to reduce the problem to the acyclic case.
Let $Q$ and $\sigma$ be as in the theorem, and let $\alpha$ be a dimension vector.
We define a new acyclic quiver $\tilde Q = (\tilde Q_0, \tilde Q_1)$ and a new vertex weight $\tilde \sigma: \tilde Q_0 \to \Z$ as follows.
For each vertex $i \in Q_0$ with a self-loop, we place the following gadget in $\tilde Q$:
the gadget consists of two copies $i^+$ and $i^-$ of $i$ and $\alpha(i)$ parallel arcs from $i^+$ to $i^-$. 
For each vertex $i \in Q_0$ without a self-loop, we place a single copy of $i$ in $\tilde Q$.
Abusing the notation, we also define $i^+ = i^- = i$ for this case.
Finally, for each non-loop arc $a$, we place an arc from $(ta)^-$ to $(ha)^+$ in $\tilde Q$.
The new weight $\tilde \sigma$ on $\tilde Q_0$ is defined as follows. 
For each vertex $i \in Q_0$, if $i$ has a self-loop, we set $\tilde \sigma(i^+) = 0$ and $\tilde \sigma(i^-) = \sigma(i)$.
If $i$ has no self-loop, we set $\tilde \sigma(i) = \sigma(i)$.
See Figure~\ref{fig:reduction} for an example of the construction.

Let $V$ be a representation of $Q$ over $\F$ with $\dimv V = \alpha$.
Define a representation $\tilde V$ of $\tilde Q$ over $\F$ as follows.
To the parallel arcs in the gadget for each $i \in Q_0$ with a self-loop $a$, assign matrix powers $V(a)^k$ ($k=0,\dots, \alpha(i)-1$).
To the other non-loop arcs corresponding to $a \in Q_1$, assign $V(a)$.
The following is the key lemma of the reduction.

\begin{lemma}\label{lem:reduction}
    Suppose that $\sigma(i) < 0$ for all $i \in Q_0$ with a self-loop. Then, $V$ is $\sigma$-semistable over $\F$ if and only if $\tilde V$ is $\tilde \sigma$-semistable over $\F$. 
\end{lemma}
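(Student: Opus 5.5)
The plan is to prove both directions with King's criterion (Lemma~\ref{lem:king}). Its statement and its real version in Appendix~\ref{sec:ss-R} do not assume acyclicity, so it applies to $Q$, which has self-loops, as well as to the acyclic quiver $\tilde Q$; every vertex here is marked. First I compare the balance conditions. We have $\dim \tilde V(i^+) = \dim \tilde V(i^-) = \alpha(i)$ and $\tilde\sigma(i^+) = 0$, so $\tilde\sigma(\dimv \tilde V) = \sigma(\dimv V)$. It therefore suffices to show that $\max_{W \leq V} \sigma(\dimv W) = \max_{\tilde W \leq \tilde V} \tilde\sigma(\dimv \tilde W)$. This equality also gives the second part of Theorem~\ref{thm:reduction}, because the maps below are explicit.

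\emph{From $V$ to $\tilde V$.} Take a subrepresentation $W \leq V$ and set $\tilde W(i^+) = \tilde W(i^-) = W(i)$. For a vertex without a self-loop this just means $\tilde W(i) = W(i)$. Let $a$ be the self-loop at $i$. Since $W(i)$ is $V(a)$-invariant, it is mapped into itself by every power $V(a)^k$, so it is preserved by all the parallel arcs of the gadget. A non-loop arc $a$ becomes an arc from $(ta)^-$ to $(ha)^+$ carrying the map $V(a)$, and it maps $W(ta)$ into $W(ha)$ because $W$ is a subrepresentation. Hence $\tilde W \leq \tilde V$. Moreover $\tilde\sigma(\dimv \tilde W) = \sigma(\dimv W)$, because the vertices $i^+$ carry weight $0$.

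\emph{From $\tilde V$ to $V$.} This is the main step, since a subrepresentation $\tilde W$ need not have $\tilde W(i^-)$ invariant under the self-loop. Given $\tilde W \leq \tilde V$, I define $W(i) = \tilde W(i)$ for each vertex without a self-loop. For a vertex $i$ with self-loop $a$, I define $W(i) = U(i) := \sum_{k \geq 0} V(a)^k \tilde W(i^+)$, the smallest $V(a)$-invariant subspace containing $\tilde W(i^+)$. By Cayley--Hamilton, $U(i) = \sum_{k=0}^{\alpha(i)-1} V(a)^k \tilde W(i^+)$. Each summand $V(a)^k \tilde W(i^+)$ is the image of $\tilde W(i^+)$ under one of the gadget arcs, so it lies in $\tilde W(i^-)$. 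Hence $\tilde W(i^+) \leq U(i) \leq \tilde W(i^-)$.

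Next I check that $W$ is a subrepresentation. Self-loops preserve $W$ by construction. Let $a$ be a non-loop arc from $j$ to $i$. We have $W(j) \leq \tilde W(j^-)$; this is an equality when $j$ has no loop. Hence $V(a) W(j) \leq \tilde W(i^+) \leq W(i)$, and the last step is again an equality when $i$ has no loop.

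Finally I compare weights. Vertices without loops contribute the same amount on both sides. For a vertex $i$ with a loop, $\sigma(i) < 0$ and $\dim U(i) \leq \dim \tilde W(i^-)$, so $\sigma(i)\dim W(i) \geq \tilde\sigma(i^-)\dim \tilde W(i^-)$, while $\tilde\sigma(i^+) = 0$. Thus $\sigma(\dimv W) \geq \tilde\sigma(\dimv \tilde W)$.

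Combining the two directions, the two maxima coincide. Together with equal total weights, King's criterion then gives that $V$ is $\sigma$-semistable exactly when $\tilde V$ is $\tilde\sigma$-semistable. The only delicate point is the choice $W(i) = U(i)$, the invariant closure of $\tilde W(i^+)$. The sign assumption $\sigma(i) < 0$ is used precisely to show that shrinking $\tilde W(i^-)$ to $U(i)$ does not decrease the weight.
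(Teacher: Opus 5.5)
Your proof is correct and follows the paper's argument: the lift $\tilde W(i^+)=\tilde W(i^-)=W(i)$ gives one direction, and for the other you take the invariant closure $\sum_{k} V(a)^k \tilde W(i^+)$ (via Cayley--Hamilton) and use $\sigma(i)<0$. The only difference is cosmetic. You apply the closure to an arbitrary $\tilde W$ and show the weight does not decrease, whereas the paper takes a maximizing $\tilde W$ and argues that $\tilde W(i^-)$ must already equal this closure. Your version also makes explicit the containment $\tilde W(i^+)\le \tilde W(i^-)$ via the $k=0$ arc, which the paper leaves implicit.
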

\begin{proof}
    \emph{[If part]} 
    Assume that $\tilde V$ is $\tilde \sigma$-semistable.
    Take any subrepresentation $W \leq V$.
    Define $\tilde W(i^+) = \tilde W(i^-) = W(i)$ for each $i \in Q_0$.
    Then, $\tilde W$ is a subrepresentation of $\tilde V$ by construction.
    Furthermore, $\sigma(\dimv W) = \tilde \sigma(\dimv \tilde W) \leq 0$.
    Similarly, $\sigma(\dimv V) = \tilde \sigma(\dimv \tilde V) = 0$.
    Thus, $V$ is $\sigma$-semistable.

    \emph{[Only if part]}
    Assume that $V$ is $\sigma$-semistable.
    Take any subrepresentation $\tilde W \leq \tilde V$ with $\tilde \sigma(\dimv \tilde W)$ maximum.
    For any self-loop $a$ at $i$, we have $\sum_{k=0}^{\alpha(i)-1} V(a)^k \tilde W(i^+) \leq \tilde W(i^-)$ by the construction of the gadget.
    Since $\tilde \sigma(i^-) = \sigma (i) < 0$ by the assumption and $\tilde \sigma(\dimv \tilde W)$ is maximum, we have 
    \[
        \tilde W(i^-) = \sum_{k=0}^{\alpha(i)-1} V(a)^k \tilde W(i^+) = \sum_{k=0}^\infty V(a)^k \tilde W(i^+),
    \]
    where the second equality follows from the Cayley-Hamilton theorem.
    Hence $\tilde W(i^-)$ is a $V(a)$-invariant subspace, i.e., $V(a) \tilde W(i^-) \leq \tilde W(i^-)$.
    For $\tilde a \in \tilde Q_1$ coming from a non-loop arc $a \in Q_1$, we have $V(a) \tilde W((t\tilde a)^-) \leq \tilde W((h\tilde a)^+)$ by the construction of $\tilde Q$.
    These together imply that $\tilde W(i^-)$ ($i \in Q_0$) form a subrepresentation of $V$.
    Furthermore, $\tilde \sigma(\dimv \tilde W) = \sum_{i \in Q_0} \sigma(i)\dim \tilde W(i^-) \leq 0$, where the inequality follows from the $\sigma$-semistability of $V$.
    Similarly, $\tilde \sigma(\dimv \tilde V) = \sigma(\dimv V) = 0$.
    Therefore, $\tilde V$ is $\tilde \sigma$-semistable.
\end{proof}

\begin{figure}
    \centering
    \subcaptionbox{$Q, V, \sigma$}{\includegraphics[height=4cm]{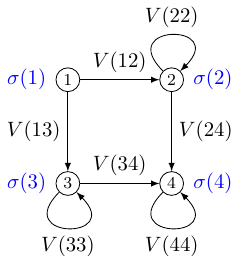}}
    \hspace{3em}
    \subcaptionbox{$\tilde Q, \tilde V, \tilde \sigma$. The values of $\tilde \sigma$ are attached to each vertex (in \textcolor{blue}{blue}).}{\includegraphics[height=4cm]{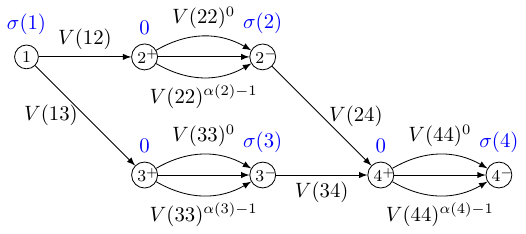}}
    \caption{An example of reduction from $(Q, V, \sigma)$ to $(\tilde Q, \tilde V, \tilde \sigma)$. The value of $\sigma(1)$ may be arbitrary, whereas the values of $\sigma(2)$, $\sigma(3)$, and $\sigma(4)$ are nonzero and have the same sign.}\label{fig:reduction}
\end{figure}

We are now ready to prove Theorem~\ref{thm:reduction}.

\begin{proof}[Proof of Theorem~\ref{thm:reduction}]
    First, we consider the case where $\sigma(i) < 0$ for all $i \in Q_0$ with a self-loop.
    Then, by Lemma~\ref{lem:reduction}, the $\sigma$-semistability of $V$ can be decided by checking the $\tilde \sigma$-semistability of $\tilde V$.
    Since $\tilde Q$ is acyclic, one can apply the algorithm of \cite{Iwamasa2025} for $\tilde V$ and $\tilde \sigma$.

    Furthermore, the above proof of Lemma~\ref{lem:reduction} shows that, given a subrepresentation $\tilde W \leq \tilde V$ that maximizes $\tilde \sigma(\dimv \tilde W)$, one can construct a subrepresentation $W \leq V$ such that $\sigma(\dimv W) = \tilde \sigma(\dimv \tilde W)$, and vice versa.
    Therefore, a subrepresentation $W \leq V$ that maximizes $\sigma(\dimv W)$ can be found by applying the algorithm of \cite{Iwamasa2025} for $\tilde V$ and $\tilde \sigma$.
    
    If $\sigma(i) > 0$ for all $i \in Q_0$ with a self-loop, apply the same argument to the transposed representation $V^\top$ of the transposed quiver $Q^\top$ and the weight $-\sigma$.
\end{proof}

\subsection*{Acknowledgments}
KM was supported by JSPS/MEXT KAKENHI JP23K11001
and by JST ERATO Grant Number JPMJER2301, Japan.
TS was supported by JSPS KAKENHI Grant Number JP19K20212, and JST, PRESTO Grant Number JPMJPR24K5, Japan.

\subsection*{Declaration of generative AI and AI-assisted technologies in the manuscript preparation process}
During the preparation of this work, the authors used GPT-5.6 Pro to proofread drafts written by the authors.
In Section~\ref{subsec:networked-LTI-alg} of an early draft, we described a different algorithm based on the reduction in Section~\ref{sec:reduction} and the existing algorithm for acyclic quivers~\cite{Iwamasa2025}.
As a result, our initial algorithm only worked when $N$ is acyclic.
After we provided the draft to GPT-5.6 Pro for proofreading, it suggested that we could remove this assumption and even provided a simpler algorithm along with our arguments.
We carefully reviewed the suggested algorithm and found that it is indeed correct.
We decided to include the suggested algorithm as Algorithm~\ref{alg:nc} in the present manuscript.
After using this tool/service, the authors reviewed and edited the content as needed and take full responsibility for the content of the published article.

\bibliographystyle{alpha}
\bibliography{main}

\appendix

\section{On semistability over the real and complex fields}\label{sec:ss-R}
In this section, we discuss the relationship between $\sigma$-semistability over the real and complex fields.
We emphasize that here we assume King's criterion (Lemma~\ref{lem:king}) only for $\F = \C$.
Let $V$ be a real representation of a quiver $Q$.
Naturally, $V$ can be seen as a complex representation of $Q$ as well, which we denote by $V_\C$, that is, $V_\C = V \otimes_\R \C$.

\begin{lemma}
    For a real representation $V$ of a quiver $Q$ with a dimension vector $\alpha$ and a weight $\sigma$ on $Q$, the following are equivalent:
    \begin{enumerate}
        \item $V$ is $\sigma$-semistable over $\R$.
        \item $\sigma(\dimv V) = 0$ and $\sigma(\dimv W) \leq 0$ for each real subrepresentation $W \leq V$.
        \item $V_\C$ is $\sigma$-semistable over $\C$.
    \end{enumerate}
\end{lemma}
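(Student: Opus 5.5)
We prove the cycle of implications (1)$\Rightarrow$(3)$\Rightarrow$(2)$\Rightarrow$(1), assuming King's criterion (Lemma~\ref{lem:king}) only over $\C$. The equivalence (2)$\Leftrightarrow$(3) is linear algebra: complex subrepresentations of $V_\C$ versus real subrepresentations of $V$. The link with (1) is geometric: real versus complex orbit closures.

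\emph{(3)$\Rightarrow$(2).} For a real subrepresentation $W\leq V$, the complexification $W_\C$ is a complex subrepresentation of $V_\C$ with $\dimv W_\C=\dimv W$. King's criterion over $\C$ then gives $\sigma(\dimv W)\leq 0$ and $\sigma(\dimv V)=\sigma(\dimv V_\C)=0$.

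\emph{(2)$\Rightarrow$(3).} Let $U\leq V_\C$ be a complex subrepresentation, and let $\bar U$ be its complex conjugate. Since all maps $V(a)$ are real, $\bar U$ is also a subrepresentation, and so are $U+\bar U$ and $U\cap\bar U$. Both are conjugation-stable, hence complexifications of real subrepresentations. By modularity, $\dim(U+\bar U)(i)+\dim(U\cap\bar U)(i)=2\dim U(i)$ for every $i$. Hence $2\sigma(\dimv U)=\sigma(\dimv (U+\bar U))+\sigma(\dimv(U\cap\bar U))\leq 0$ by (2). King's criterion over $\C$ then yields (3).

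\emph{(1)$\Leftrightarrow$(3).} Over $\R$, semistability means the orbit closure of $(V,1)$ under $\GL(Q,\alpha;\R)$ avoids the zero section. For (3)$\Rightarrow$(1), note that $\GL(Q,\alpha;\R)\subseteq\GL(Q,\alpha;\C)$ and the real orbit closure lies in the complex one. So if the real closure met the zero section, the complex one would too, contradicting (3).

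For (1)$\Rightarrow$(3), we argue contrapositively through (2): if (3) fails, then (2) fails by the above. Either $\sigma(\dimv V)\neq 0$, or there is a real subrepresentation $W$ with $\sigma(\dimv W)>0$.

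\begin{itemize}
\item If $\sigma(\dimv V)\neq 0$, scalar matrices $g=t\cdot I$ fix $V$ and multiply the character coordinate by $t^{\sigma(\dimv V)}$. Letting $t\to 0$ or $t\to\infty$ in $\R$ drives it to $0$.
\item If $\sigma(\dimv W)>0$, choose a real complement of $W(i)$ in each $V(i)$. Act by the real one-parameter subgroup $\lambda(t)_i=\mathrm{diag}(t^{-1}I_{W(i)},I)$, or any real sign-adjusted variant. In block upper-triangular form, the off-diagonal block of each $V(a)$ scales by a nonnegative power of $t$ with the correct sign convention, so the orbit stays bounded. Meanwhile the character coordinate scales as $t^{-\sigma(\dimv W)}$. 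Choosing $t\to\infty$ (or the reciprocal, depending on convention), the representation converges and the character goes to $0$.
\end{itemize}

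This is exactly King's Hilbert--Mumford-type construction, carried out with real cocharacters. It shows the real orbit closure meets the zero section, so (1) fails.

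The main obstacle is this last step: getting the signs and exponents of the real one-parameter subgroup right so that the representation part converges while the character part vanishes. The fix is to check that the block-triangularity coming from $V(a)W(ta)\leq W(ha)$ makes the only nondiagonal scaling $t^{1}$ applied to the lower-left-zero-complementary block, and then take the appropriate limit.
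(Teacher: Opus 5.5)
Your proof is correct and uses the same three ingredients as the paper: a real one-parameter subgroup scaling $W(i)$ to destabilize when (2) fails, the $U\cap\bar U$, $U+\bar U$ trick with conjugation-stable subspaces being complexifications for (2)$\Rightarrow$(3), and the inclusion $\GL(Q,\alpha;\R)\subseteq\GL(Q,\alpha;\C)$ for (3)$\Rightarrow$(1). Your extra direct (3)$\Rightarrow$(2) is redundant, and the sign bookkeeping you flagged works out as follows: with $\lambda(t)_i=\mathrm{diag}(t^{-1}I_{W(i)},I)$, the off-diagonal block of $V(a)$ is multiplied by $t^{-1}$ (not a nonnegative power of $t$) and the character by $t^{-\sigma(\dimv W)}$, so $t\to\infty$ works, which is equivalent to the paper's $\mathrm{diag}(tI_{W(i)},I)$ with $t\to 0$.
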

\begin{proof}
    \emph{[1 $\implies$ 2]} We will show the contrapositive.
    Suppose that $\sigma(\dimv V) \neq 0$. 
    Consider $g_i = t I_{\alpha(i)}$ ($i \in Q_0$) for $t \in \R \setminus \{0\}$. 
    Then, $(g \cdot V)(a) = t V(a) t^{-1} = V(a)$ ($a \in Q_1$) and $\chi_\sigma(g) = \prod_{i \in Q_0} t^{\sigma(i) \alpha(i)} = t^{\sigma(\dimv V)}$.
    Therefore, by taking $t \to 0$ or $t \to \infty$, we have $\chi_\sigma(g) \to 0$, while $(g \cdot V) \in \Rep(Q, \alpha; \R)$.
    Hence, $V$ is not $\sigma$-semistable over $\R$.

    Similarly, suppose that there exists some real subrepresentation $W \leq V$ such that $\sigma(\dimv W) > 0$.
    By the change of basis, one can assume that
    \[
        V(a) = \begin{bNiceMatrix}[first-row,first-col]
                  & W(ta) &  \\
            W(ha) & W_{11}(a) & W_{12}(a) \\
                  & O & W_{22}(a)
        \end{bNiceMatrix} 
        \quad (a \in Q_1). 
    \]
    Consider
    \[
        g_i = \begin{bNiceMatrix}[first-row,first-col]
                  & W(i) &  \\
            W(i) & t I_{\dim W(i)} & O \\
                  & O & I_{\alpha(i) - \dim W(i)}
        \end{bNiceMatrix} 
        \quad (i \in Q_0)
    \]
    for $t \in \R \setminus \{0\}$.
    Then, 
    \[
        (g \cdot V)(a) = \begin{bNiceMatrix}[first-row,first-col]
                  & W(ta) &  \\
            W(ha) & W_{11}(a) & t W_{12}(a) \\
                  & O & W_{22}(a)
        \end{bNiceMatrix}
        \quad (a \in Q_1)
    \]
    and $\chi_\sigma(g) = \prod_{i \in Q_0} t^{\sigma(i) \dim W(i)} = t^{\sigma(\dimv W)}$.
    Therefore, by taking $t \to 0$, we have $\chi_\sigma(g) \to 0$ (since $\sigma(\dimv W) > 0$), while $(g \cdot V) \in \Rep(Q, \alpha; \R)$.
    Hence, $V$ is not $\sigma$-semistable over $\R$.

    \emph{[2 $\implies$ 3]} 
    Again, we show the contrapositive.
    Suppose that $V_\C$ is not $\sigma$-semistable over $\C$.
    By King's criterion over $\C$ (Lemma~\ref{lem:king} for $\F = \C$), $\sigma(\dimv V_\C) \neq 0$ or there exists some complex subrepresentation $W \leq V_\C$ such that $\sigma(\dimv W) > 0$.
    If $\sigma(\dimv V_\C) \neq 0$, then $\sigma(\dimv V) = \sigma(\dimv V_\C) \neq 0$.
    Therefore, we can assume that $\sigma(\dimv V_\C) = 0$. 
    Take a complex subrepresentation $W \leq V_\C$ with $\sigma(\dimv W) > 0$.
    Since $V$ is a real representation, the complex conjugates $\bar W(i)$ ($i \in Q_0$) also form a complex subrepresentation $\bar W$ of $V_\C$ with $\sigma(\dimv \bar W) > 0$.
    Thus, $W_1 := W \cap \bar W$ and $W_2 := W + \bar W$ are subrepresentations of $V_\C$.
    Also note that $W_1$ and $W_2$ are invariant under complex conjugation, i.e., $W_1 = \bar W_1$ and $W_2 = \bar W_2$.
    By Proposition~\ref{prop:complexification-subsp} below, each $W_k(i)$ ($k = 1,2$) is the complexification of some real subspace $U_k(i) \leq V(i)$.
    Obviously, the subspaces $U_k(i)$ form a real subrepresentation $U_k$ of $V$.
    Since
    \[
        0 < \sigma(\dimv W) + \sigma(\dimv \bar W) = \sigma(\dimv (W \cap \bar W)) + \sigma(\dimv (W + \bar W)),
    \]
    at least one of $\sigma(\dimv W_1)$ and $\sigma(\dimv W_2)$ is positive.
    Then, the corresponding real subrepresentation $U_1$ or $U_2$ satisfies $\sigma(\dimv U_1) > 0$ or $\sigma(\dimv U_2) > 0$.

    \emph{[3 $\implies$ 1]} 
    If $V_\C$ is $\sigma$-semistable over $\C$, the orbit closure of $(V_\C, 1) \in \Rep(Q, \alpha; \C) \oplus \chi_\sigma$ under the $\GL(Q, \alpha; \C)$-action does not intersect with $\Rep(Q, \alpha; \C) \times \{0\}$.
    Since $\GL(Q, \alpha; \R)$ is a subgroup of $\GL(Q, \alpha; \C)$, the orbit closure of $(V, 1)$ under the $\GL(Q, \alpha; \R)$-action does not intersect with $\Rep(Q, \alpha; \C) \times \{0\}$.
    Because $V$ is real, every element in the latter orbit is real, so it does not intersect with $\Rep(Q, \alpha; \R) \times \{0\}$ either.
    Thus, $V$ is $\sigma$-semistable over $\R$.
\end{proof}

\begin{proposition}\label{prop:complexification-subsp}
    Let $W \leq \C^n$ be a complex subspace.
    If $W = \bar W$, then there exists a real subspace $U \leq \R^n$ such that $W$ is the complexification of $U$, i.e., $W = U \otimes_\R \C$.
\end{proposition}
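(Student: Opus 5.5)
The plan is to define $U$ explicitly as the set of real vectors in $W$, namely $U := W \cap \R^n$, and then show $W = U \otimes_\R \C$. Under the identification $U \otimes_\R \C = U + iU \subseteq \C^n$, this means showing that $W = U + iU$. Clearly $U$ is a real subspace of $\R^n$. Since $W$ is a complex subspace containing $U$, it also contains $iU$, so the inclusion $U + iU \subseteq W$ is immediate.

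For the reverse inclusion, I use the hypothesis $W = \bar W$. Take any $w \in W$ and write $w = x + iy$ with $x, y \in \R^n$. Because $\bar w \in \bar W = W$, both
\[
x = \tfrac12 (w + \bar w)
\quad\text{and}\quad
y = \tfrac{1}{2i}(w - \bar w)
\]
lie in $W$, since $W$ is closed under complex linear combinations. They are real vectors, so $x, y \in U$, and hence $w = x + iy \in U + iU$. This gives $W = U + iU$.

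It remains to check that the natural map $U \otimes_\R \C \to U + iU$ is an isomorphism, which is what justifies writing $W = U \otimes_\R \C$. Take a real basis $u_1, \dots, u_r$ of $U$. These vectors are real and linearly independent over $\R$, so they are linearly independent over $\C$: if $\sum (a_j + i b_j) u_j = 0$ with $a_j, b_j \in \R$, then taking real and imaginary parts gives $\sum a_j u_j = 0$ and $\sum b_j u_j = 0$, so all $a_j = b_j = 0$. Therefore the $u_j$ form a complex basis of $U + iU = W$, and $\dim_\C W = \dim_\R U$.

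The only point needing care is this last step, that real independence implies complex independence for real vectors. It is settled by the separation into real and imaginary parts above, so I expect no genuine obstacle.
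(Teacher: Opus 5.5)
Your proof is correct and is essentially the paper's argument: both rest on the observation that $\frac12(w+\bar w)$ and $\frac{1}{2\sqrt{-1}}(w-\bar w)$ are real vectors lying in $W$. The paper argues by contradiction with a maximal real subspace $U$ satisfying $U\otimes_\R\C \le W$, which is exactly your $W\cap\R^n$; you argue directly and also check that $U\otimes_\R\C \to U+\sqrt{-1}\,U$ is injective, a point the paper leaves implicit.
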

\begin{proof}
    Let $U \leq \R^n$ be a real subspace such that $U \otimes_\R \C \leq W$ and $\dim U$ is maximum among all such real subspaces.
    Let $U_\C = U \otimes_\R \C$ and suppose that $U_\C \neq W$.
    Then, there exists a nonzero $w \in W \setminus U_\C$.
    Since $W = \bar W$, the complex conjugate $\bar w$ is also in $W$.
    Therefore, $\Re(w) = \frac{1}{2}(w + \bar w)$ is a real vector in $W$.
    Similarly, $\Im(w) = \frac{1}{2\sqrt{-1}}(w - \bar w)$ is also a real vector in $W$.
    By the maximality of $U$, we must have $\Re(w), \Im(w) \in U$; otherwise, we can extend $U$ by adding these real vectors.
    Then, $w = \Re(w) + \sqrt{-1} \Im(w) \in U_\C$, a contradiction to $w \in W \setminus U_\C$.
\end{proof}

\end{document}